\documentclass{amsart}
\usepackage[utf8]{inputenc}
\usepackage{enumerate}
\usepackage{amsmath}
\usepackage{cite}
\usepackage{amsfonts}
\usepackage{mathrsfs}
\usepackage{galois}
\usepackage[toc,page]{appendix}
\usepackage{xcolor}
\usepackage[colorlinks, linkcolor = blue, anchorcolor = blue, citecolor = blue]{hyperref}
\usepackage{eso-pic}
\usepackage{enumitem}
\usepackage{amssymb}

\def\acts{\curvearrowright} 

\newtheorem{proposition}{Proposition}[section]
\newtheorem{claim}{Claim}
\newtheorem{corollary}{Corollary}[section]
\newtheorem{theorem}{Theorem}[section]

\newtheorem{lemma}{Lemma}[section]

\newtheorem{conjecture}{Conjecture}

\theoremstyle{definition}
\newtheorem{definition}{Definition}[section]
\newtheorem{remark}{Remark}[section]

\numberwithin{equation}{section}

\newcommand{\set}[1]{\{#1\}}

\newcommand{\Met}{\mathrm{Met}}
\newcommand{\Diff}{\mathrm{Diff}}
\newcommand{\PSC}{\mathrm{PSC}}
\newcommand{\Isom}{\mathrm{Isom}}
\newcommand{\CC}{\mathrm{CC}}
\hyphenation{mani-fold}
\hyphenation{multipli-city}

\title{Equivariant $3$-manifolds with positive scalar curvature}

\author{Tsz-Kiu Aaron Chow}
\address{Department of Mathematics, Columbia University}
\email{achow@math.columbia.edu}
\author{Yangyang Li}
\address{Department of Mathematics, Princeton University, Fine Hall, 304 Washington Road, Princeton, NJ 08540, USA}
\email{yl15@math.princeton.edu}

\begin{document}
\bibliographystyle{abbrvalpha}
\begin{abstract}
    In this paper, for any compact Lie group $G$, we show that the space of $G$-invariant Riemannian metrics with positive scalar curvature (PSC) on any closed three-manifold is either empty or contractible. In particular, we prove the generalized Smale conjecture for spherical three-orbifolds. Moreover, for connected $G$, we make a classification of all PSC $G$-invariant three-manifolds.   
\end{abstract}

\maketitle

\section{Introduction}

    For a connected, orientable, smooth manifold $M$, we denote the diffeomorphism group of $M$ by $\Diff(M)$. When $G$ is a compact Lie group which is a subgroup of $\Diff(M)$, we denote by $\Met(M, G)$ the space of $G$-invariant Riemannian metrics and by $\Met_{\PSC}(M, G)$ the space of $G$-invariant metrics with positive scalar curvature. For any $g \in \Met(M, G)$, we will call $(M, g)$ a \textbf{$G$-invariant Riemannian manifold}. Note that we always assume the action of $G$ on $M$ to be effective. 

    The main goal of this paper is to study the space $\Met_{\PSC}(M, G)$ for closed three-dimensional manifold $M$, and the topology of $M$ itself when $\Met_{\PSC}(M, G) \neq \emptyset$.

    When $G = \set{\mathrm{Id}}$, by using Ricci flow together with Gromov-Lawson surgery techniques \cite{gromov_classification_1980}  F. Cod\'a Marques \cite{marques_deforming_2012} first proved that if $\Met_{\PSC}(M) = \Met_{\PSC}(M, \set{Id})$ is nonempty, then $\Met_{\PSC}(M)/\Diff(M)$ is path-connected. In particular, $\mathcal{R}_{+}(S^3)$ is path-connected due to J. Cerf's path-connectedness result \cite{cerf_sur_1968} of $\mathcal{R}_+(S^3)$. Later, Bessi{\`e}res-Besson-Maillot-Marques \cite{bessieres_deforming_2020} showed that the path-connectedness holds for complete manifolds with bounded geometry and uniformly positive scalar curvature. Interested readers may refer to Carlotto-Li \cite{carlotto_constrained_2019} and Hirsch-Lesourd \cite{hirsch_moduli_2021} for similar results of three-manifolds with boundary or asymptotically flat three-manifolds with boundary.

    Recently, R. Bamler and B. Kleiner \cite{bamler_ricci_2019} made a remarkable progress and showed that $\Met_{\PSC}(M)$ is either empty or contractible, whose proof relies on the existence and uniqueness of singular Ricci flow, a canonical Ricci flow proposed by J. Lott and B. Kleiner \cite{kleiner_singular_2017,bamler_uniqueness_2018}. Moreover, their arguments also lead to an affirmative answer to the generalized Smale conjecture for spherical space forms.

    \begin{conjecture}[Generalized Smale conjecture for spherical space forms]
        If $(M, g)$ is an isometric quotient of the round sphere, then the inclusion map $\Isom(M, g) \hookrightarrow \Diff(M, g)$ is a homotopy equivalence.
    \end{conjecture}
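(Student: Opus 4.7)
The plan is to deduce the conjecture from the contractibility of $\Met_{\PSC}(M)$, which is the case $G = \{\mathrm{Id}\}$ of the main theorem of this paper (equivalently, a theorem of Bamler-Kleiner). Fix the round metric $g_0$ on $M$, write $\Isom_0 = \Isom(M, g_0)$, a compact Lie subgroup of $\Diff(M)$, and let $\Met_{\mathrm{rd}}(M)$ denote the space of metrics on $M$ of constant sectional curvature $1$. The group $\Diff(M)$ acts on $\Met(M)$ by pullback; by rigidity of constant curvature metrics on spherical space forms, the orbit of $g_0$ equals $\Met_{\mathrm{rd}}(M)$, and the orbit map gives a diffeomorphism of Fr\'echet manifolds
\[
\Diff(M)/\Isom_0 \xrightarrow{\,\sim\,} \Met_{\mathrm{rd}}(M), \qquad [\varphi] \longmapsto \varphi^* g_0.
\]
This realizes $\Isom_0 \hookrightarrow \Diff(M) \to \Diff(M)/\Isom_0$ as a locally trivial principal bundle, and in particular a Serre fibration.

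Next, I would construct a continuous deformation retract
\[
R \colon [0,1] \times \Met_{\PSC}(M) \longrightarrow \Met_{\PSC}(M)
\]
satisfying $R(0, g) = g$, $R(1, g) \in \Met_{\mathrm{rd}}(M)$, and $R(s, g) = g$ for every $s$ and every $g \in \Met_{\mathrm{rd}}(M)$. The construction uses normalized singular Ricci flow together with its uniqueness and continuous dependence on initial data from Bamler-Kleiner: every PSC metric on $M$ becomes extinct in finite time and, after time-rescaling, the extinction profile is a round metric of curvature $1$. For $M = S^3$ (or whenever the flow stays smooth up to extinction) this reduces to Hamilton's classical convergence theorem for PSC metrics.

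Combining these pieces, contractibility of $\Met_{\PSC}(M)$ forces $\Met_{\mathrm{rd}}(M)$, and therefore $\Diff(M)/\Isom_0$, to be weakly contractible. The long exact sequence in homotopy of the principal $\Isom_0$-bundle then shows that $\Isom_0 \hookrightarrow \Diff(M)$ induces isomorphisms on all homotopy groups. Since both groups have the homotopy type of CW complexes (as Fr\'echet Lie groups), Whitehead's theorem upgrades the weak equivalence to a genuine homotopy equivalence.

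The main obstacle is producing the deformation retract in the presence of a non-trivial fundamental group. Ricci flow on $M = S^3/\Gamma$ can develop finite-time singularities, so Hamilton's argument does not apply directly to an arbitrary initial PSC metric. What is needed is \emph{continuous} dependence, in the $C^\infty$ topology, of the entire flow --- through its singular times and all the way to extinction --- on the initial metric. This is precisely where the canonical and unique singular Ricci flow machinery of Kleiner-Lott and Bamler-Kleiner enters; once it is in place, the orbit-space identification and the fibration argument are essentially formal. The extension of this scheme to spherical three-orbifolds, promised in the abstract, requires running the analogous argument with $G$ nontrivial and using the full equivariant contractibility theorem of this paper in place of Bamler-Kleiner.
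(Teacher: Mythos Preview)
Your fibration framework is exactly right and matches the paper: one identifies $\Met_{K\equiv 1}(M)$ with $\Diff(M)/\Isom(M,g_0)$ via the orbit map, reduces the conjecture to weak contractibility of the space of round (equivalently, CC-) metrics, and finishes with the long exact sequence plus Whitehead. The paper does precisely this in Section~5.2.

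The gap is in your step constructing the deformation retract $R$. The assertion that ``after time-rescaling, the extinction profile is a round metric of curvature $1$'' does not survive contact with singular Ricci flow when $\Gamma$ is nontrivial. The time slices $\mathcal{M}_t$ of the singular flow are incomplete open manifolds whose topology can change with $t$; near extinction they may consist of several nearly-round or nearly-cylindrical pieces, not a single metric on $M$. There is no canonical way to read off a round metric on $M$ from the spacetime, and continuous dependence of $\mathcal{M}$ on the initial data does not by itself yield a continuous map $\Met_{\PSC}(M)\to\Met_{\mathrm{rd}}(M)$ fixing round metrics. This is exactly the difficulty the partial-homotopy machinery is built to overcome: it reconstructs a path of metrics \emph{on $M$} by backward induction and Gromov--Lawson gluing across the singular times, not by evaluating the flow at extinction.

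The paper (following \cite{bamler_ricci_2019}) therefore does not build a retract at all. Instead it shows directly that any $\alpha:S^k\to\Met_{\CC}(M)$ is null-homotopic: extend $\alpha$ over $D^{k+1}$ arbitrarily into $\Met(M)$, apply the deformation theorem (Theorem~\ref{thm:2} here), whose clause~(4) keeps the boundary family inside $\Met_{\CC}$ throughout, push the entire disk into conformally flat metrics, and then invoke Lemma~\ref{equiv BK lem 9.2} to pick out the canonical round conformal representative. So the genuine input replacing your retract is the \emph{relative} form of the PSC deformation theorem (CC stays CC) together with a conformal-geometry endgame; your outline correctly locates the obstacle but the proposed mechanism for resolving it is not the one that works.
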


    Historically, the bijection on path components of $\Isom(S^3/\Gamma, g)$ and $\Diff(S^3/\Gamma, g)$ dates back to Cerf's works on $S^3$ \cite{cerf_nullite_1962,cerf_nullite_1962-1,cerf_nullite_1962-2,cerf_nullite_1962-3}. This bijection for general spherical space forms was confirmed through a number of works \cite{MR524878,rubinstein_3-manifolds_1979,10.1007/BFb0088106,bonahon_diffeotopies_1983,rubinstein_one-sided_1984,boileau_scindements_1991}. Based on Hatcher's impressive work \cite{hatcher_proof_1983} on $S^3$, \cite{ivanov_homotopy_1984,hong_diffeomorphisms_2012,bamler_ricci_2017} have proved the homotopy equivalence results for most of spherical space forms, before Bamler-Kleiner's resolution \cite{bamler_ricci_2019}.

    In this paper, we first extend the Bamler-Kleiner's contractibility result to the $G$-invariant setting.

    \begin{theorem}\label{thm:1}
        If $M$ is a connected sum of spherical space forms and copies of $S^2 \times S^1$, then $\Met_{\PSC}(M, G)$ is nonempty and contractible; Otherwise, $\Met_{\PSC}(M, G)$ is empty.
    \end{theorem}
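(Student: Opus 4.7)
The plan is to handle the two directions separately and then upgrade Bamler--Kleiner's contractibility argument \cite{bamler_ricci_2019} to the $G$-equivariant setting. For the ``otherwise'' direction, if $\Met_{\PSC}(M, G) \neq \emptyset$ then $\Met_{\PSC}(M) \neq \emptyset$, so by Perelman's geometrization together with the classical surgery results of Schoen--Yau and Gromov--Lawson, $M$ must be a connected sum of spherical space forms and copies of $S^2 \times S^1$. This direction is purely topological and does not use the $G$-action.

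For nonemptiness, assuming $M$ has the stated form and carries a smooth $G$-action, I would produce a $G$-invariant PSC metric by an equivariant Gromov--Lawson construction. First I would invoke an equivariant prime decomposition of $(M, G)$ --- using equivariant sphere theorems and an equivariant geometrization in the style of Dinkelbach--Leeb --- to realize $M$ as a $G$-invariant connected sum along $G$-orbits of embedded spheres, whose pieces are either spherical space forms or $S^2 \times S^1$'s with induced $G$-action. Each prime piece carries a natural $G$-invariant round or cylindrical PSC metric, and performing connected sum through symmetric Gromov--Lawson necks along the $G$-orbits of the distinguished spheres yields a $G$-invariant PSC metric on $M$.

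The contractibility follows by running the Bamler--Kleiner scheme inside the equivariant category. Given a continuous map $\sigma \colon K \to \Met_{\PSC}(M, G)$ from a compact CW parameter space, I would evolve each $\sigma(k)$ by the Kleiner--Lott singular Ricci flow \cite{kleiner_singular_2017}. The crucial observation is that if $g \in \Met(M, G)$ then the singular Ricci flow $\cF_g$ starting at $g$ is itself $G$-equivariant: for any $\phi \in G$, the pullback $\phi^* \cF_g$ has the same $G$-invariant initial datum as $\cF_g$, so by the uniqueness theorem of \cite{bamler_uniqueness_2018} the two flows coincide. Consequently every step of Bamler--Kleiner's construction --- continuous-in-$k$ selection of the singular spacetime, finite-time extinction under PSC, and the resulting homotopy to a reference metric --- stays inside $\Met_{\PSC}(M, G)$, yielding weak contractibility. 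Since $\Met_{\PSC}(M, G)$ is an open subset of a Fr\'echet space and hence an ANR, weak contractibility implies contractibility.

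The main obstacle I anticipate is that the Bamler--Kleiner retraction is not merely ``run the flow''; it requires continuous selection of canonical structures on singular Ricci flow spacetimes (matching with $\kappa$-solution models near singular times, identifying extinct components with a reference model, continuously resolving the diffeomorphism ambiguity, etc.) and ultimately a comparison against a canonical metric in each diffeomorphism type. Making each such choice $G$-equivariantly continuous in $k$ demands an equivariant upgrade of their uniqueness and canonical-neighborhood results, and on spherical pieces this is essentially equivalent to the generalized Smale conjecture for spherical three-orbifolds announced in the abstract. I therefore expect Theorem~\ref{thm:1} and the equivariant generalized Smale conjecture to be bootstrapped together rather than proved in strict sequence.
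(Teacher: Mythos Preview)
Your overall strategy---prove weak contractibility by making each step of Bamler--Kleiner's construction $G$-equivariant, using uniqueness of singular Ricci flow to propagate the $G$-action to the spacetime, and conclude via the ANR property---matches the paper's approach. The paper carries this out by verifying $G$-invariance of the rounding process (Section~3) and of the partial homotopy machinery (Section~4), packaging everything into the deformation result Theorem~\ref{thm:2}, to which Theorem~\ref{thm:1} is explicitly reduced.

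Two points where your proposal diverges. First, your nonemptiness argument via equivariant prime decomposition and equivariant Gromov--Lawson surgery is unnecessary and takes a harder route: Theorem~\ref{thm:2} already deforms \emph{any} $G$-invariant metric (not just PSC ones) to a $G$-invariant PSC metric, so applying it with $K$ a point and any averaged metric gives nonemptiness for free, without invoking equivariant sphere theorems or Dinkelbach--Leeb. Second, your expectation that Theorem~\ref{thm:1} and the generalized Smale conjecture must be bootstrapped together is off. In the paper's logic, Theorem~\ref{thm:2} (and hence Theorem~\ref{thm:1}) is established first, and the equivariant Smale statement (Corollary~\ref{equiv smale conj}) is then derived as a consequence, via a separate fibration argument for $\Met_{K\equiv 1}(S^3,G)$. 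The ``comparison against a canonical metric'' that worries you is handled inside the partial homotopy argument by the equivariant rounding process and $G$-invariant PSC-conformal deformations (Appendix~A), not by any prior knowledge of $\Diff(S^3,G)$.
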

    \begin{remark}
        When the group $G$ consists of $\mathrm{Id}$ and the reflection map, this was recently proved by A. Carlotto and C. Li \cite{carlotto_constrained_2019,carlotto_constrained_2021}, based on the techniques in F. Cod\'a Marques \cite{marques_deforming_2012} and Bamler-Kleiner \cite{bamler_ricci_2019}.
    \end{remark}

    Note that metric spaces mentioned above, i.e., $\Met(M,G)$ and $\Met_{\PSC}(M, G)$, are all absolute neighborhood retracts (ANR) (See, for example, \cite[Section 3]{carlotto_constrained_2021}), so we only need to prove the weak homotopic equivalence, i.e., every $S^k$ continuous family of metrics in $\Met_{\PSC}(M, G)$ can bound a $B^{k+1}$ conitnuous family of metrics in $\Met_{\PSC}(M, G)$. Hence, Theorem \ref{thm:1} can reduce to the following deformation result, which is an equivariant version of \cite[Theorem~8.1]{bamler_ricci_2019}.

    \begin{theorem}\label{thm:2}
        If $M$ is a connected sum of spherical space forms and copies of $S^2 \times S^1$, then for any continuous family (indexed by a simplicial complex $K$) of metrics $(g^s)_{s \in K} \subset \Met(M, G)$, we have a family of metric deformations $(h^s_t)_{s \in K, t \in [0, 1]} \subset \Met(M, G)$ satisfying the following conditions.
        \begin{enumerate}
             \item $h^s_0 = g^s\,$;
             \item $(h^s_1)_{s \in K} \subset \Met_{\PSC}(M, G)$;
             \item If $g^s \in \Met_{\PSC}(M, G)$, then $h^s_t \in \Met_{\PSC}(M, G)$ for any $t \in [0, 1]$;
             \item In addition, if we assume that for a subcomplex $L \subset K$, $(M^s, g^s)$ is a CC-metric for all $s \in L$, then $h^s_t$ is a CC-metric for all $t \in [0,1]$. Here, a metric on a 3-manifold is called a \textbf{CC-metric} if it is homothetic to a quotient of the round sphere or round cylinder.
         \end{enumerate}
    \end{theorem}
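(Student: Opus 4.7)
The plan is to follow the strategy of Bamler--Kleiner's proof of their Theorem 8.1, constructing the desired deformation via singular Ricci flow and checking at each step that every ingredient admits a $G$-equivariant refinement. First, for each $s \in K$ I would run the singular Ricci flow spacetime $\mathcal{M}^s$ of Kleiner--Lott--Bamler with initial condition $(M, g^s)$. Since the Ricci flow PDE is diffeomorphism-equivariant and Bamler--Kleiner's uniqueness theorem applies within the class of complete, bounded-curvature solutions with canonical neighborhoods, the isometric $G$-action on $(M, g^s)$ extends uniquely to an action on $\mathcal{M}^s$ by isometries of each time slice; continuous dependence of the flow on initial data then yields a continuous $K$-family of such equivariant spacetimes.

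Next I would construct the $G$-equivariant analog of Bamler--Kleiner's \emph{partial homotopy}: a homotopy defined on the complement of a $G$-invariant open subset $U^s \subset M$ that tracks the Ricci flow, together with a gluing-in of CC-metrics on each $G$-invariant component of $U^s$. The inductive step identifies the high-curvature region at some time $t$ as a union of $G$-invariant $\epsilon$-necks, $\epsilon$-caps, and quotient necks via the canonical neighborhood theorem. Equivariance of these local models is essentially automatic because the classification of $\kappa$-solutions and standard solutions forces any ambient isometry to act by an element of the model's intrinsic symmetry group ($O(2)$ on round necks, a finite spherical group on caps, etc.), so after a $G$-averaging of the almost-isometry one obtains $G$-invariant diffeomorphisms onto standard models into which CC-metrics can be glued $G$-invariantly. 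On the subcomplex $L$ where $g^s$ is already CC, the singular Ricci flow is a homothetic rescaling of $g^s$ and the partial homotopy is trivially extended, giving condition (4).

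The argument is finished by iterating this extension until either the flow becomes extinct or the scalar curvature is everywhere large. In the former case, the terminal time slice of $\mathcal{M}^s$ is a disjoint union of CC-metrics, yielding condition (2); in the latter, one interpolates the current partial homotopy with the Ricci-flow-deformed metric, which is already PSC. Property (1) follows because the construction starts at $g^s$, and property (3) follows because the Ricci flow preserves PSC and the glued CC-metrics are themselves PSC.

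The main obstacle I expect is maintaining the $G$-equivariance of the partial homotopy \emph{continuously} in $s \in K$. The combinatorial structure of the high-curvature regions---which $G$-orbits of necks appear, how caps are paired with spherical components---may change as $s$ varies, and the orbit type of $G$ on the model spaces can jump at isolated parameter values. I anticipate handling this by passing to a sufficiently fine simplicial refinement of $K$ on which the equivariant combinatorics are locally constant, interpolating carefully along cell boundaries, and using a $G$-equivariant slice theorem to select the gluing regions continuously. Controlling the continuous dependence at the finitely many ``singular times'' where new components appear or disappear---and verifying that the $G$-invariant center-of-neck construction behaves continuously under these bifurcations---is where I expect the bulk of the technical work to live, and for this I would rely on the local stability estimates for singular Ricci flow developed by Bamler--Kleiner.
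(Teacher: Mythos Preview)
Your high-level strategy---run singular Ricci flow, extend the $G$-action via Bamler--Kleiner uniqueness, then thread equivariance through the partial homotopy machinery---matches the paper's approach exactly. The paper likewise reduces everything to verifying that each construction in \cite{bamler_ricci_2019} preserves $G$-invariance, then invokes the equivariant analogues of \cite[Propositions~7.6, 7.8, 7.9, 7.21]{bamler_ricci_2019} together with \cite[Lemmas~8.4--8.8]{bamler_ricci_2019} to push the partial homotopy backward in time. Your anticipation that simplicial refinement handles the continuity of the combinatorics in $s$ is also on target.

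Where your outline departs from the paper, and where I think there is a real gap, is the mechanism you propose for obtaining $G$-invariant local models. You suggest $G$-averaging the almost-isometries to canonical neighborhoods. This is delicate: averaging diffeomorphisms that are only $\varepsilon$-close to isometries need not produce a diffeomorphism, let alone one varying continuously in $s$ across orbit-type changes. The paper avoids this entirely by inserting an intermediate step you do not mention: the \emph{equivariant rounding process} (the $\mathcal{R}$-structure of \cite[Section~5]{bamler_ricci_2019}). Rather than averaging, the paper goes through each stage of Bamler--Kleiner's rounding (cutoff near almost-extinct components, Bryant modification, cylindrical modification, time-vector-field modification, etc.) and checks that every object constructed there---the CMC spheres $\Sigma_x$, the $L^2$-minimizing vector fields $Z$, the operators $\mathrm{RD}^2$, $\mathrm{RD}^3$, $\mathrm{RF}$---is \emph{canonical}, i.e.\ uniquely determined by the local geometry, hence automatically commutes with any isometry. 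This yields a genuine $G$-invariant spherical fibration with exact (not approximate) $O(3)$-symmetry on the fibers, and it is this exact structure that the partial homotopy then manipulates. Replacing your averaging step with the canonicality argument for the $\mathcal{R}$-structure should close the gap; the rest of your plan is sound.
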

    \begin{remark}
        In the following, the space of all $G$-invariant CC-metrics on $M$ will be denoted by $\Met_{\CC}(M, G)$.
    \end{remark}

    Our proof is based on Bamler-Kleiner's arguments. Modulo technical difficulties, their idea is intuitive for $G = \set{Id}$. First, the existence and uniqueness of singular Ricci flows (See Section \ref{sec:rf}) imply that there exists a continuous family of singular Ricci flows $\set{\mathcal{M}^s}_{s \in K}$. Then, thanks to the classifications of $\kappa$-solutions (See, for example, \cite{Bre18}, \cite{Bre19}, \cite[Section 3]{bamler_ricci_2019}), which are singularity models, one can apply a rounding process near singularities, such that they can be parametrized continuously. Similar to the proof in \cite{marques_deforming_2012}, for each fixed metric in the family, one could apply Gromov-Lawson gluing techniques on singularities by backward induction to obtain a deformation to a metric with positive scalar curvature. In their paper, Bamler-Kleiner introduced a novel partial homotopy argument so that one can perform such gluings continuously along $K$. 

    Similar to \cite[Theorem~1.3]{bamler_ricci_2019}, we can also prove an equivariant version of the Smale conjecture for $S^3$, and in particular, the generalized Smale conjecture for spherical $3$-orbifolds.

    \begin{corollary}\label{equiv smale conj}
        Given a compact Lie subgroup $G$ of $\Isom(S^3, g_{\mathrm{round}}) = \mathrm{SO}(4)$, let $\Diff(S^3, G)$ and $\Isom(S^3, G)$ denote the subgroups commuting with $G$ of $\Diff(S^3)$ and $\Isom(S^3, g_{\mathrm{round}})$, respectively. Then the inclusion map $\Isom(S^3, G) \hookrightarrow \Diff(S^3, G)$ is a homotopy equivalence. In particular, the generalized Smale conjecture for spherical $3$-orbifolds holds.
    \end{corollary}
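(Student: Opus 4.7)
The proof follows the Bamler-Kleiner deduction of the classical Smale conjecture for spherical space forms from contractibility of $\Met_{\PSC}$, promoted to the $G$-equivariant setting. Starting from the principal $\Isom(S^3, G)$-bundle
\[
\Isom(S^3, G) \hookrightarrow \Diff(S^3, G) \twoheadrightarrow \Diff(S^3, G)/\Isom(S^3, G),
\]
the corollary reduces to showing that the base is weakly contractible. The orbit map $\phi \mapsto \phi^* g_{\mathrm{round}}$ sends $\Diff(S^3, G)$ into $\Met(S^3, G)$---the pullback is $G$-invariant precisely because $\phi$ commutes with $G \subset \mathrm{SO}(4)$---with stabilizer $\Diff(S^3, G) \cap \mathrm{SO}(4) = \Isom(S^3, G)$. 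This identifies the base with the $\Diff(S^3, G)$-orbit of $g_{\mathrm{round}}$ in $\Met_{\CC}(S^3, G)$; multiplication by homotheties $\lambda > 0$ then accounts for the entire connected component of $g_{\mathrm{round}}$ inside $\Met_{\CC}(S^3, G)$, so it suffices to show this component is weakly contractible.

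The geometric heart is the weak homotopy equivalence $\Met_{\CC}(S^3, G) \hookrightarrow \Met_{\PSC}(S^3, G)$, which combined with Theorem~\ref{thm:1} yields weak contractibility of $\Met_{\CC}(S^3, G)$. I would establish it by $G$-equivariant singular Ricci flow \cite{kleiner_singular_2017, bamler_uniqueness_2018}: every $G$-invariant PSC metric on $S^3$ becomes extinct as a round metric in finite time, the flow remains $G$-equivariant because Ricci flow preserves the isometry group of the initial metric, and continuous dependence on initial data supplies the parameterized deformation. Theorem~\ref{thm:2}(4) then ensures that if an $S^k$-family already lies in $\Met_{\CC}$, it is kept there throughout the deformation of any $B^{k+1}$-extension, simultaneously yielding $\pi_k$-surjectivity and $\pi_k$-injectivity of the inclusion. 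The ANR property of $\Met_{\PSC}(S^3, G)$ upgrades the resulting weak equivalence to a genuine homotopy equivalence.

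For the spherical $3$-orbifold $\mathcal{O} = S^3/\Gamma$ with $\Gamma \subset \mathrm{SO}(4)$ finite, identify $\Diff(\mathcal{O}) = N_{\Diff(S^3)}(\Gamma)/\Gamma$ and $\Isom(\mathcal{O}) = N_{\mathrm{SO}(4)}(\Gamma)/\Gamma$. The short exact sequences
\[
1 \to Z_{\Diff(S^3)}(\Gamma) \to N_{\Diff(S^3)}(\Gamma) \to Q \to 1, \qquad 1 \to Z_{\mathrm{SO}(4)}(\Gamma) \to N_{\mathrm{SO}(4)}(\Gamma) \to Q' \to 1,
\]
with finite quotients $Q, Q' \subset \mathrm{Aut}(\Gamma)$, reduce the orbifold statement to the centralizer case applied with $G = \Gamma$ together with the matching $Q = Q'$; the latter follows because any diffeomorphism-realized outer automorphism of $\Gamma$ can be realized by an isometry using the same deformation machinery on the associated round metric.

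The main obstacle is the second step: verifying that singular Ricci flow genuinely supplies a $G$-equivariant, parameter-continuous deformation retract of $\Met_{\PSC}(S^3, G)$ onto $\Met_{\CC}(S^3, G)$ compatible with Theorem~\ref{thm:2}(4); and confirming that subgroups of $\mathrm{SO}(4)$ which are conjugate to $G$ in $\Diff(S^3)$ but not in $\Diff(S^3, G)$ do not introduce additional connected components of $\Met_{\CC}(S^3, G)$ detached from the orbit of $g_{\mathrm{round}}$, so that weak contractibility of $\Met_{\CC}(S^3, G)$ really translates into weak contractibility of the orbit.
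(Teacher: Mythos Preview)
Your overall strategy matches the paper's: reduce to weak contractibility of $\Met_{\CC}(S^3, G)$ via a fibration $\Isom(S^3,G)\to\Diff(S^3,G)\to\Met_{K\equiv 1}(S^3,G)$, and deduce that contractibility from Theorem~\ref{thm:2}. Your sketch of the second step is close to what the paper does, though the paper makes the ``flow to round'' step precise via an equivariant conformal uniformization lemma (a canonical $G$-invariant conformal factor onto a round or cylindrical metric, the equivariant analogue of \cite[Lemma~9.2]{bamler_ricci_2019}) rather than by literally following the flow to extinction.

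The genuine gap---which you flag but do not close---is the assertion that the $\Diff(S^3, G)$-orbit of $g_{\mathrm{round}}$, together with homotheties, fills the entire connected component of $\Met_{\CC}(S^3, G)$. Contractibility of $\Met_{\CC}(S^3, G)$ tells you there is only one component, but not that the orbit exhausts it; the orbit is a priori a proper subset, neither obviously open nor closed, and without this you cannot identify $\Diff(S^3,G)/\Isom(S^3,G)$ with $\Met_{K\equiv 1}(S^3,G)$. The paper proves transitivity directly and independently of the contractibility result: given $g\in\Met_{K\equiv 1}(S^3,G)$, pick any $\psi\in\Diff(S^3)$ with $\psi_\sharp g_{\mathrm{round}}=g$, and seek $r\in SO(4)$ so that $\psi\circ r$ commutes with $G$. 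This amounts to showing that the two representations $\rho_1(h)=h$ and $\rho_2(h)=\psi^{-1}h\psi$ of $G$ on $\mathbb{R}^{4}$ are equivalent. The paper first checks that each $\psi^{-1}h\psi$ is again a rotation (using $G$-invariance of $g$), then invokes de Rham's spherical rigidity theorem (diffeomorphic rotations of $S^n$ are isometric) to conclude $\chi_{\rho_1}=\chi_{\rho_2}$, whence $\rho_1\cong\rho_2$ by the representation theory of compact Lie groups. Your phrasing of the obstacle (``subgroups conjugate to $G$ in $\Diff(S^3)$ but not in $\Diff(S^3,G)$'') is aimed in the right direction but not quite the question: the precise issue is whether two embeddings $G\hookrightarrow SO(4)$ that are conjugate in $\Diff(S^3)$ are already conjugate in $SO(4)$, and this is exactly what de Rham rigidity plus character theory resolves.
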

    \begin{remark}
        Among others, Mecchia-Seppi \cite{mecchia_isometry_2019} recently established the bijection result of $\pi_0$'s for spherical $3$-orbifolds.
    \end{remark}

    Furthermore, for any connected compact Lie group $G$, combined with various classification works on $3$-manifolds with Lie group actions \cite{mostertCompactLieGroup1957,orlikActions3Manifolds1968,neumann3DimensionalGManifolds2Dimensional1968,alekseevskyRiemannianGmanifoldOnedimensionalOrbit1993} and etc., we can give a thorough list of possible manifolds $M$ admitting equivariant metrics with positive scalar curvature.

    \begin{theorem}\label{thm:3}
        If $\Met_{\PSC}(M, G) \neq \emptyset$ and $G$ is connected, then the oriented $3$ manifold $M$ can be classified with respect to the dimension of its principal orbits $G/H$ as follows:
        \begin{description}
            \item[$\dim(G/H) = 0$] $M$ is a connected sum of spherical space forms and copies of $S^2 \times S^1$. 
            $G$ is a trivial action.
            \item[$\dim(G/H) = 1$] If the set of fixed points $M^G = \emptyset$, then $M$ is either a spherical space form or $S^2 \times S^1$; Otherwise, $M$ is a connected sum of $S^3$, $L(p,q)$ and copies of $S^2 \times S^1$. 
            $G$ is $SO(2)$.
            \item[$\dim(G/H) = 2$] $M$ is $S^2 \times S^1$, a lens space or $RP^3 \sharp RP^3$. 
            $G$ is either $T^2$ or $SO(3)$ ($G \neq T^2$ when $M = RP^3 \sharp RP^3$).
            \item[$\dim(G/H) = 3$] $M$ is either $S^3$, $RP^3$, $L(m, 1)\ (m > 2)$, $S^3/D^*_m\ (m > 2)$, $S^3/T^*$, $S^3/O^*$ or $S^2 \times S^1$. 
            $G$ is a transitive Lie subgroup of the corresponding isometry group.
        \end{description}
    \end{theorem}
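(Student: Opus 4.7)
The plan is to argue by case analysis on $d := \dim(G/H) \in \{0,1,2,3\}$, combining in each case the classical classification of connected compact Lie group actions on closed oriented $3$-manifolds \cite{mostertCompactLieGroup1957,orlikActions3Manifolds1968,neumann3DimensionalGManifolds2Dimensional1968,alekseevskyRiemannianGmanifoldOnedimensionalOrbit1993} with the PSC obstruction supplied by Theorem \ref{thm:1}.

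The cases $d=0$ and $d=3$ are the quickest. If $d=0$, then by connectedness of $G$ every orbit is a single point, so $G$ acts trivially, and Theorem \ref{thm:1} immediately gives the connected-sum description. If $d=3$, the action is transitive and $M$ is a compact oriented homogeneous $3$-manifold; enumerating connected transitive subgroups of $\Isom(S^3/\Gamma, g_{\mathrm{round}})$ and of $\Isom(S^2 \times S^1)$ excludes the icosahedral quotient (since $I^* \subset SO(4)$ has discrete normalizer) and leaves exactly the listed manifolds, each of which already carries a homogeneous PSC metric.

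For $d=2$ (cohomogeneity one), the principal orbit is a compact oriented homogeneous surface of $G$, hence either $T^2$ or $S^2$, forcing $G \in \{T^2, SO(3)\}$. Applying the Mostert-Alekseevsky reconstruction theorem, $M$ is built from a cylinder over a principal orbit by capping off with at most two tubular neighborhoods of singular orbits; enumerating compatible isotropy pairs yields $S^2 \times S^1$, the lens spaces, and $RP^3 \# RP^3$. The last requires an $O(2) \subset G$ so that $G/O(2) = RP^2$ is a singular orbit, and since $T^2$ contains no $O(2)$, the stated constraint $G \neq T^2$ when $M = RP^3 \# RP^3$ is automatic. All these manifolds carry equivariant PSC metrics by Theorem \ref{thm:1}.

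The main obstacle is the case $d=1$. First, a slice-representation analysis at a principal orbit shows that $H$ acts trivially on both the $1$-dimensional orbit and its $2$-dimensional normal slice, so by effectiveness $H = \{e\}$, forcing $G = SO(2)$. I then invoke the Orlik-Raymond-Neumann classification of $SO(2)$-actions on closed oriented $3$-manifolds in terms of orbit invariants $(M^*, \{(\alpha_i, \beta_i)\})$, where the quotient $2$-orbifold $M^*$ has boundary exactly the image of $M^G$. If $M^G = \emptyset$, then $M$ is Seifert-fibered; intersecting this with the constraint from Theorem \ref{thm:1} that $M$ be a connected sum of spherical space forms and copies of $S^2 \times S^1$ forces $M$ to be prime, hence a spherical space form or $S^2 \times S^1$. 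If $M^G \neq \emptyset$, an equivariant decomposition of $M$ along invariant $2$-spheres separating the fixed circles (each of which sits inside a solid torus on which $SO(2)$ rotates about the core), combined again with Theorem \ref{thm:1}, produces the connected-sum form $S^3 \# L(p_1, q_1) \# \cdots \# L(p_k, q_k) \# \ell (S^2 \times S^1)$. The bulk of the technical work is in matching the Orlik-Raymond orbit data with the prime decomposition so that the final list is both exhaustive and non-redundant.
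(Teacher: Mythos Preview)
Your overall plan---case analysis by principal orbit dimension, invoking the classical classification theorems and intersecting with the PSC constraint from Theorem~\ref{thm:1}---is exactly the paper's strategy, and your treatment of $d=0$ and $d=2$ matches the paper essentially verbatim.

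The case $d=3$ is where you and the paper genuinely diverge. The paper does not appeal to the classification of compact homogeneous $3$-manifolds or to normalizers in $SO(4)$; instead it runs Ricci flow from a $G$-invariant PSC metric and uses transitivity of the action to argue that every point goes singular at the same time, so the rescaled limit at extinction is a $\kappa$-solution. Transitivity again excludes the Bryant soliton (which has a distinguished tip), and the $\kappa$-solution classification \cite[Theorem~3.2]{bamler_ricci_2019} together with orientability forces $M$ to be a spherical space form or $S^2\times S^1$; the final list then comes from Wolf's enumeration of homogeneous spherical space forms. Your route---enumerating connected transitive subgroups of $\Isom(S^3/\Gamma)$ and $\Isom(S^2\times S^1)$---is more elementary and avoids the flow machinery, but as written it presupposes that $M$ is already one of these manifolds. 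You still owe a sentence explaining why a homogeneous PSC $3$-manifold cannot be a nontrivial connected sum (or $T^3$); this is standard, but it is precisely the step the paper's Ricci flow argument handles internally.

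In the case $d=1$ with $M^G=\emptyset$, your implication ``Seifert-fibered and a connected sum of spherical space forms and copies of $S^2\times S^1$ forces $M$ to be prime'' is false as stated: $RP^3\sharp RP^3$ is Seifert-fibered and is such a connected sum, yet is not prime. The conclusion is still correct---$RP^3\sharp RP^3$ admits no fixed-point-free effective $S^1$-action and is absent from the Orlik--Raymond list---but your argument does not establish this. The paper sidesteps the issue by quoting the Orlik--Raymond list directly and then discarding the non-orientable and aspherical entries.
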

    
    Here is the organization of the paper.
    
    In Section 2 to Section 4, we verify $G$-invariance in the constructions of singular Ricci flows, rounding process and partial homotopies, so that we can adapt Bamler-Kleiner's work to our setting. 
    
    In Section 5, we complete the proof of our main theorem, and its corollary, i.e., the generalized Smale conjecture for spherical orbifolds. Among others, we need to prove the fibration $\Diff(S^n, G) \rightarrow \Met_{K\equiv 1}(S^n, G)$, which is closely related to the de Rham spherical rigidity theorem \cite{de_rham_reidemeisters_1964}.
    
    In Section 6, we classify all equivariant PSC 3-manifolds in terms of the group action.
    
    Appendix A contains some preparatory results about equivariant metrics, which will be essentially used in Section 4.

\section*{Acknowledgments}

The authors would like to thank Professor Richard Bamler for his interest in this project and inspiring conversations on the paper \cite{bamler_ricci_2019}. The first author would like to thank his advisor Professor Simon Brendle for his continuing support. The second author thanks his advisor Fernando Coda Marques for his constant support, Chao Li for telling him about his work joint with A. Carlotto \cite{carlotto_constrained_2021}, and Jonathan Zung for several helpful discussions.

\section{Equivariant singular Ricci flows}\label{sec:rf}

    As in the previous section, let $G$ be a compact Lie group which is a subgroup of $\Diff(M)$ acting on a Riemannian manifold $M$ via $\rho: G \acts M$. 

    \begin{definition}[Equivariant subset]
        A subset of $U$ of $G$ is said to be $G$-invariant if for any $h \in G$, we have
        \begin{equation}
            \rho_h.U = U\,.
        \end{equation}
    \end{definition}

    \begin{definition}[Equivariant function]
        A function $f$ on a $G$-invariant subset is said to be $G$-invariant if for any $h \in G$, we have
        \begin{equation}
            (\rho_h)^*(f) = f\,.
        \end{equation}
    \end{definition}

    \begin{definition}[Equivariant vector field]
        A vector field $X$ on a $G$-invariant subset $U$ is said to be $G$-invariant if for any $h \in G$, we have
        \begin{equation}
            (\rho_h)_*(X) = X\,.
        \end{equation}
    \end{definition}
    \begin{remark}
        By tensor contractions, we can easily extend the definition of equivariance to any tensors on an equivariant subset $U$.
    \end{remark}

    \begin{definition}[Equivariant singular Ricci flow]{\label{def equivariance singular ricci flow}}
        For a compact Lie group $G$, we say that $(\mathcal{M}, \mathfrak{t}, \partial_t, g) $ is a $G$-invariant singular Ricci flow if $(\mathcal{M}, \mathfrak{t}, \partial_t, g)$ is a singular Ricci flow (See \cite[Definition~3.15]{bamler_ricci_2019}) with the following properties.
        \begin{enumerate}[label = (\arabic*)]
            \item $\mathcal{M}_0$ is a closed three-manifold endowed with a $G$-invariant metric $g_0$, i.e., there exists an isometric group action $\rho: G \acts (\mathcal{M}_0, g_0)$;
            \item $\rho$ extends to a smooth group action $\hat\rho: G\acts \mathcal{M}$ such that $\hat \rho$ preserves each time slice $\mathcal{M}_t$ and acts on it isometrically;
            \item The vector field $\partial_{\mathfrak{t}}$ is $G$-invariant under the group action $\hat \rho$.
        \end{enumerate}
    \end{definition}

    We first recall the existence and uniqueness results of singular Ricci flows.

    \begin{theorem}[Existence {\cite[Theorem~1.1]{KL17}}]
        For every closed Riemannian three-manifold $(M, g)$, there exists a singular Ricci flow $(\mathcal{M}, \mathfrak{t}, \partial_{\mathfrak{t}}, g)$ with $(\mathcal{M}_0, g_0) \cong (M, g)$.
    \end{theorem}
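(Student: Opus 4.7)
The plan is to realize the singular Ricci flow as a limit of Hamilton-Perelman Ricci flows with $\delta$-cutoff surgery as the surgery scale tends to zero, following the Kleiner-Lott strategy. Using Perelman's construction, I would first produce, for each $i \in \mathbb{N}$, a Ricci flow with $\delta_i$-cutoff surgery $\mathcal{M}_i$ starting from $(M, g)$, with $\delta_i \to 0$. Each $\mathcal{M}_i$ is naturally a $4$-dimensional spacetime equipped with a time function $\mathfrak{t}_i$ and a time vector field $\partial_{\mathfrak{t}_i}$ satisfying $\partial_{\mathfrak{t}_i} \mathfrak{t}_i = 1$; it is smooth except at a discrete set of surgery times, where necks are excised and replaced with standard caps.

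Along the sequence I would carry the standard a priori bounds that are uniform in $i$: the Hamilton-Ivey pinching estimate, Perelman's canonical neighborhood theorem modelling high-curvature regions on $\kappa$-solutions, and $\kappa$-non-collapsing at scales up to $\sqrt{t}$. Because these bounds are scale-invariant, Hamilton's Cheeger-Gromov compactness theorem applies on any precompact spacetime region where the curvature is bounded. Exhausting the prospective limit by such regions and diagonalizing would yield a smooth pointed limit $(\mathcal{M}, \mathfrak{t}, \partial_{\mathfrak{t}}, g)$, with $(\mathcal{M}_0, g_0) \cong (M, g)$ inherited directly from the initial condition.

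The hard part is verifying that $\mathcal{M}$ actually satisfies the axioms of a singular Ricci flow in the sense of \cite[Definition~3.15]{bamler_ricci_2019}, most notably (i) the canonical neighborhood property at small scales throughout $\mathcal{M}$, and (ii) the completeness property that $|{\mathrm{Rm}}|^{-1/2}$ is proper on each time slice $\mathcal{M}_t$ for $t > 0$. Property (i) passes to the limit because it holds in each $\mathcal{M}_i$ uniformly in $i$ at all points of sufficiently large curvature. For (ii), one must show that any sequence of points in $\mathcal{M}_t$ escaping to spatial infinity has curvature tending to infinity; this is precisely where the $\delta_i$-cutoff surgery caps play an essential role, as they ensure that no high-curvature region is "lost" before the limit is extracted, so that in $\mathcal{M}$ the only way to reach infinity on a time slice is through a singular stratum of unbounded curvature.
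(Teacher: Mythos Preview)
The paper does not prove this statement at all: it is simply quoted as \cite[Theorem~1.1]{KL17} and used as a black box, together with the uniqueness theorem from \cite{bamler_uniqueness_2018}, to deduce Theorem~\ref{thm equivariance singular ricci flow}. So there is nothing to compare your argument against in this paper.

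That said, your sketch is a faithful outline of the Kleiner--Lott construction: one does build Perelman's Ricci flows with $\delta_i$-cutoff surgery with $\delta_i \to 0$, passes to a spacetime limit via Hamilton--Cheeger--Gromov compactness on regions of bounded curvature, and then verifies the singular Ricci flow axioms. One point worth sharpening is your description of completeness: the relevant property is not just that curvature blows up along sequences escaping to infinity in a fixed time slice, but rather that the spacetime is $0$-complete in the sense that worldlines can only leave $\mathcal{M}$ through regions where the curvature scale $\rho = |{\mathrm{Rm}}|^{-1/2}$ tends to zero. Establishing this in the limit requires, in addition to the canonical neighborhood and non-collapsing estimates you mention, a careful accounting of how the surgery regions disappear in the limit (their spacetime volume tends to zero as $\delta_i \to 0$), which is where most of the technical work in \cite{KL17} lies.
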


    \begin{theorem}[Uniqueness {\cite[Theorem~1.3]{bamler_uniqueness_2018}}]\label{thm:unique}
        Any singular Ricci flow $(\mathcal{M}, \mathfrak{t}, \partial_{\mathfrak{t}}, g)$ is uniquely determined by its initial time-slice $(\mathcal{M}_0, g_0)$ up to isometry in the following sense:

        If $(\mathcal{M}, \mathfrak{t}, \partial_{\mathfrak{t}}, g)$ and $(\mathcal{M}, \mathfrak{t}, \partial_{\mathfrak{t}}, g)$ are two singular Ricci flows and $\phi: (\mathcal{M}_0, g_0) \rightarrow (\mathcal{M}'_0, g'_0)$ is an isometry, then there exists a unique diffeomorphism $\hat\phi: \mathcal{M} \rightarrow \mathcal{M}'$ such that
        \begin{equation*}
            \hat \phi^* g' = g, \quad \hat \phi|_{\mathcal{M}_0} = \phi, \quad \hat \phi_* \partial_{\mathfrak{t}} = \partial_{\mathfrak{t}'}, \quad \mathfrak{t}'\comp \phi = \mathfrak{t}\,.
        \end{equation*}
    \end{theorem}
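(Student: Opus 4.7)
The plan is to construct the isometry $\hat{\phi} : \mathcal{M} \to \mathcal{M}'$ by evolving $\phi$ forward in $\mathfrak{t}$ along the worldline flow generated by $\partial_{\mathfrak{t}}$, and to show that this extension is well-defined and smooth across every singular time. Uniqueness of $\hat{\phi}$ will then follow from standard ODE uniqueness for the $\partial_{\mathfrak{t}}$-worldlines, combined with the requirement that $\hat{\phi}$ restrict to an isometry on each time slice.

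First I would handle the smooth regions. On any time interval $[a,b]$ for which both $\mathcal{M}_{[a,b]}$ and $\mathcal{M}'_{[a,b]}$ are ordinary smooth Ricci flows with compact time slices, the classical Hamilton--Shi uniqueness theorem shows that an isometry $\phi_a : \mathcal{M}_a \to \mathcal{M}'_a$ extends uniquely to a smooth family of isometries $\phi_t : \mathcal{M}_t \to \mathcal{M}'_t$, and the induced spacetime map intertwines $\partial_{\mathfrak{t}}$ with $\partial_{\mathfrak{t}'}$. This defines $\hat{\phi}$ on every smooth time slab. Next I would appeal to Perelman's canonical neighborhood theorem: near each singular time the high-curvature region of every time slice is covered by $\epsilon$-necks and $\epsilon$-caps, each modeled on a $\kappa$-solution. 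Since $\kappa$-solutions are classified up to isometry, in the high-curvature region the two flows $\mathcal{M}$ and $\mathcal{M}'$ carry canonical identifications with the same model, which must match the $\hat{\phi}$ constructed on smooth slabs up to an error controlled by $\epsilon$.

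The main obstacle, and the technical heart of the proof, is a quantitative bilipschitz stability estimate: starting with two singular Ricci flows whose initial data differ by $\delta$ in a suitable bilipschitz sense, one must show that corresponding time slices remain close at finer and finer scales, with the closeness decaying faster than the canonical neighborhood scale shrinks. Sending $\delta \to 0$ (permissible because $\phi$ is an exact isometry) then forces $\mathcal{M}$ and $\mathcal{M}'$ to agree on arbitrarily small scales near every singular time, so that the $\hat{\phi}$ defined on the smooth part extends continuously, and by the first step smoothly, across the singular set. The hardest step is establishing this stability estimate uniformly up to singular times: standard parabolic stability bounds blow up as one approaches the singular set, and controlling the error across scale transitions is exactly what requires the substantial machinery developed in \cite{bamler_uniqueness_2018}.
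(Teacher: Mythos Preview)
This theorem is not proved in the paper; it is quoted verbatim from \cite[Theorem~1.3]{bamler_uniqueness_2018} and used as a black box (for instance, in the proof of Theorem~\ref{thm equivariance singular ricci flow}). There is therefore no ``paper's own proof'' to compare your proposal against.

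That said, your outline is a fair high-level summary of the strategy actually carried out in \cite{bamler_uniqueness_2018}: classical uniqueness on smooth time slabs, canonical neighborhoods to control the high-curvature region, and a quantitative stability/comparison estimate that survives passage through singular times. You correctly identify the last point as the heart of the matter and acknowledge that it is where the real work of \cite{bamler_uniqueness_2018} lies. As written, your proposal is a roadmap rather than a proof: the stability estimate you describe is precisely the hundred-plus pages of that paper, and nothing in your sketch indicates how one would actually obtain it. For the purposes of the present paper, simply citing the result is what is expected.
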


    These two results indicate the canonicality of singular Ricci flows, and thus, leads to the existence and uniqueness of equivariant singular Ricci flows.

    \begin{theorem}{\label{thm equivariance singular ricci flow}}
        If $(M, g)$ is a $G$-invariant Riemannian three-manifold, then the unique singular Ricci flow $(\mathcal{M}, \mathfrak{t}, \partial_{\mathfrak{t}}, g)$ with initial data $(M, g)$ is also a $G$-invariant singular Ricci flow.
    \end{theorem}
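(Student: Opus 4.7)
The plan is to invoke the uniqueness theorem (Theorem~\ref{thm:unique}) with the initial isometry taken to be each element of $G$, and then check that the resulting family of lifted diffeomorphisms assembles into a smooth group action on $\mathcal{M}$ with the required invariance properties.

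More precisely, for every $h \in G$ the map $\rho_h : (\mathcal{M}_0, g_0) \to (\mathcal{M}_0, g_0)$ is an isometry, so Theorem~\ref{thm:unique} (applied with $(\mathcal{M}', \mathfrak{t}', \partial_{\mathfrak{t}'}, g') = (\mathcal{M}, \mathfrak{t}, \partial_{\mathfrak{t}}, g)$ and $\phi = \rho_h$) produces a \emph{unique} diffeomorphism $\hat\rho_h : \mathcal{M} \to \mathcal{M}$ satisfying $\hat\rho_h^* g = g$, $\hat\rho_h|_{\mathcal{M}_0} = \rho_h$, $(\hat\rho_h)_* \partial_{\mathfrak{t}} = \partial_{\mathfrak{t}}$, and $\mathfrak{t} \circ \hat\rho_h = \mathfrak{t}$. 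The last two properties already give conditions (2)--(3) of Definition~\ref{def equivariance singular ricci flow}, provided we can promote $h \mapsto \hat\rho_h$ to a smooth group action. The homomorphism property is immediate from the uniqueness clause: for $h_1, h_2 \in G$, both $\hat\rho_{h_1 h_2}$ and $\hat\rho_{h_1}\circ\hat\rho_{h_2}$ are diffeomorphisms of $\mathcal{M}$ preserving $g$, $\partial_{\mathfrak{t}}$, and $\mathfrak{t}$, and both restrict to $\rho_{h_1} \circ \rho_{h_2}$ on $\mathcal{M}_0$; by uniqueness they must coincide. Similarly $\hat\rho_{\mathrm{Id}} = \mathrm{Id}_{\mathcal{M}}$.

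It remains to verify that the map $\hat\rho : G \times \mathcal{M} \to \mathcal{M}$, $(h, x) \mapsto \hat\rho_h(x)$, is jointly smooth. Since each $\hat\rho_h$ is already smooth in $x$, the issue is smoothness in $h$. The natural approach is to realize $\hat\rho$ as the flow of a vector field: for any $\xi$ in the Lie algebra $\mathfrak{g}$, the one-parameter subgroup $t \mapsto \hat\rho_{\exp(t\xi)}$ is a one-parameter family of isometries of $(\mathcal{M}, g)$ commuting with $\partial_{\mathfrak{t}}$, and by the uniqueness clause it depends on the initial-time Killing field $X_\xi$ on $(\mathcal{M}_0, g_0)$. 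One then shows that $X_\xi$ extends to a time-independent (in the sense of being $\partial_{\mathfrak{t}}$-invariant) Killing field $\hat X_\xi$ on $\mathcal{M}$ whose flow is precisely $\hat\rho_{\exp(t\xi)}$; smoothness of the action then follows from smooth dependence of flows of vector fields on parameters, together with connectedness of components and compactness of $G$ to pass from a neighborhood of the identity to all of $G$. Alternatively, one may appeal directly to a continuous-dependence refinement of Theorem~\ref{thm:unique} (initial isometries depending continuously/smoothly on a parameter produce lifts with the same regularity), which is implicit in the singular Ricci flow machinery and is used later in the paper to construct continuous families of flows.

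The main obstacle is precisely this joint smoothness step: the uniqueness theorem is stated pointwise in the isometry $\phi$, so one must either extract from its proof (or from the canonical-neighborhood/closeness structure of singular Ricci flows) a parametric version, or else give a direct Killing-field argument on $\mathcal{M}$. I would favor the Killing-field route, since the isometry $\hat\rho_h^* g = g$ for each $h$ ensures that for every $\xi \in \mathfrak{g}$ the infinitesimal generator $\hat X_\xi := \frac{d}{dt}\big|_{t=0} \hat\rho_{\exp(t\xi)}$ is well defined on each time slice and is a Killing field commuting with $\partial_{\mathfrak{t}}$, hence globally defined on $\mathcal{M}$; compactness of $G$ and the group-homomorphism property then give a smooth action. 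With smoothness in hand, the three defining conditions of an equivariant singular Ricci flow are immediate.
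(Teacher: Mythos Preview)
Your core argument---lift each $\rho_h$ to $\hat\rho_h$ via the uniqueness theorem, then use uniqueness again to verify the homomorphism identity $\hat\rho_{h_1h_2}=\hat\rho_{h_1}\circ\hat\rho_{h_2}$---is exactly the paper's proof. The paper stops there: it does not discuss joint smoothness of $G\times\mathcal{M}\to\mathcal{M}$ at all.

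Your additional discussion of smoothness is therefore not a divergence in strategy but rather extra care on a point the paper simply omits. Your concern is legitimate given that Definition~\ref{def equivariance singular ricci flow} explicitly asks for a \emph{smooth} action, and your two suggested remedies (a Killing-field argument, or a parametric refinement of Theorem~\ref{thm:unique}) are both reasonable ways to fill this gap. One small caution on the Killing-field route: writing $\hat X_\xi := \tfrac{d}{dt}\big|_{t=0}\hat\rho_{\exp(t\xi)}$ presupposes at least $C^1$ dependence of $\hat\rho_h$ on $h$, which is part of what you are trying to prove; to avoid circularity you would instead want to propagate the initial Killing field $X_\xi$ forward along the spacetime (Killing fields are preserved under Ricci flow on the regular part, and one then argues across the singular set), and only afterward identify its flow with $t\mapsto\hat\rho_{\exp(t\xi)}$ via uniqueness. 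The parametric-uniqueness route, which is indeed implicit in the machinery used elsewhere in the paper for continuous families of flows, is cleaner if you are willing to cite it.
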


    \begin{proof}
        We denote the group action on $M$ by $\rho: G \acts M$. For any $h \in G$, by the uniqueness theorem \ref{thm:unique}, there is a unique isometry $\hat\rho_h:\mathcal{M}\to\mathcal{M}$ with the property that $\hat\rho_h = \rho_h $ on $\mathcal{M}_0 = M$. It suffices to show that $\set{\hat\rho_h}_{h \in G}$ is also a group action. Indeed, for $h_1, h_2 \in G$, $\rho_{h^{-1}_1h_2} = \rho_{h^{-1}_1}\comp\rho_{h_2}$, and thus, by the uniqueness theorem again, we have
        \begin{equation*}
            \hat\rho_{h^{-1}_1h_2} = \hat\rho_{h^{-1}_1}\comp\hat\rho_{h_2}\,.
        \end{equation*}
        
        In conclusion, $\hat\rho: G \acts \mathcal{M}$ defined above is a group action, and  is a singular Ricci flow.
    \end{proof}

\section{Equivariant rounding process}

    In this section, we shall modify the rounding process (Section 5) in \cite{bamler_ricci_2019} such that perturbed metrics on Ricci flow spacetimes are equivariant so long as the initial data are.

    Let $M$ be a smooth manifold of dimension $n = 3$ or $4$, $G$ be a compact Lie group and $\rho:G \curvearrowright M$ be a diffeomorphic action. For simplicity, we will omit the notation $\rho$ if it is clear and use $h$ instead of $\rho_h$ to denote the action for $h \in G$.

    \begin{definition}[Equivariant spherial structure]
        A \textbf{$G$-invariant spherical structure} $\mathcal{S}$ on a $G$-invariant subset $U \subset M$ is a smooth $S^2$-fiber bundle structure an open dense $G$-invariant subset $U' \subset U$ with the following properties.
        \begin{enumerate}[label = (\arabic*)]
            \item The bundles structure is equipped with a smooth fiberwise metric of constant curvature $1$;
            \item For every $x \in U$, there exists a neighborhood $V \subset U$ of $x$ and an $O(3)$-action $\xi: O(3) \acts V$ preserving all $S^2$-fibers in $V\cap U'$ as isometric and effective fiberwise actions;
            \item All orbits on $U \setminus U'$, called singular (spherical) fiber, are diffeomorphic to either a point or $\mathbb{R}P^2$;
            \item Moreover, for any fiber $F$ and any $h \in G$, $h.F$ is also a fiber of $\mathcal{S}$.
        \end{enumerate}
        We call $U = \mathrm{domain}(\mathcal{S})$ the domain of $\mathcal{S}$, and each $S^2$-fiber a regular fiber.   
    \end{definition}

    \begin{definition}[Compatible metrics, {\cite[Definition 5.4]{bamler_ricci_2019}}]
        On a $G$-invariant subset $U \subset M$, a $G$-invariant metric $g$ is compatible with a $G$-invariant spherical structure $\mathcal{S}$, if near every point in $U$, there is a local $O(3)$-action compatible with $\mathcal{S}$ is isometric with respect to $g'_t$.
    \end{definition}
    
    \begin{definition}[Equivariant $\mathcal{R}$-structure]
        A \textbf{$G$-invariant $\mathcal{R}$-structure} on a $G$-invariant singular Ricci flow $(\mathcal{M}, \mathfrak{t}, \partial_{\mathfrak{t}}, g)$ is a tuple $(g', \partial'_{\mathfrak{t}}, U_{S_2}, U_{S_3}, \mathcal{S})$ with the following properties.
        \begin{enumerate}[label = (\arabic*)]
            \item $\mathcal{S}$ is a $G$-invariant spherical structure, each fiber of which has a constant $\mathfrak{t}$;
            \item $g'$ is a $G$-invariant metric on $\ker \mathrm{d}\mathfrak{t}$ with $g'_t$ compatible with $\mathcal{S}$, which is obtained from $g$ by a rounding process near singularities;
            \item $\partial'_{\mathfrak{t}}$ is a $G$-invariant vector field on $\mathcal{M}$ with $\partial'_{\mathfrak{t}} \mathfrak{t} = 1$ and preserving $\mathcal{S}$;
            \item $U_{S_2} = \mathrm{domain}(\mathcal{S})$ is a $G$-invariant open subset;
            \item $U_{S_3}$ is a $G$-invariant open subset such that for any $t$, $U_{S_3} \cap \mathcal{M}_t$ is a union of compact components with constant curvature with respect to $g'_t$;
            \item $U_{S_3} \setminus U_{S_2}$ is open; 
            \item $U_{S_3}$ is invariant under the forward flow of $\partial'_{\mathfrak{t}}$ and each component at $t\geq 0$ evolves homothetically;
        \end{enumerate}
    \end{definition}

    With these definitions, our main goal is to show the existence of a transverse continuous family of $G$-invariant $\mathcal{R}$-structures (See \cite[Definition~5.11]{bamler_ricci_2019}). These is an equivariant version of \cite[Theorem~5.12]{bamler_ricci_2019}.
  
    \begin{theorem}\label{thm:equiv_round}
        For any $\delta > 0$, there exists a constant $C = C(\delta) < \infty$ and a continuous, decreasing function $r_{\mathrm{rot}, \delta}: \mathbb{R}_+ \times [0, \infty) \rightarrow \mathbb{R}_+$ such that the following holds.

        Any continuous family $(\mathcal{M}^s)_{s \in X}$ of $G$-invariant singular Ricci flows admits a transversely continuous family of $G$-invariant $\mathcal{R}$-structures $(\mathcal{R}^s = (g'^{,s}, \partial'^{,s}_{\mathfrak{t}}, U^s_{S_2}, U^s_{S_3}, \mathcal{S}^s))_{s \in X}$ such that for any $s \in X$:
        \begin{enumerate}[label = (\alph*)]
            \item $\mathcal{R}^s$ is supported on 
                \[
                    \set{x \in \mathcal{M}^s: \rho_{g'^{,s}}(x) < r_{\mathrm{rot},\delta}(r_{\mathrm{initial}}(\mathcal{M}^s_0, g^s_0), \mathfrak{t}(x))}\,.
                \]
            \item $g'^{,s} = g^s$ and $\partial'^{,s}_{\mathfrak{t}} = \partial^s_{\mathfrak{t}}$ on
                \[
                    \set{x \in \mathcal{M}^s: \rho_{g'^{,s}}(x) > C r_{\mathrm{rot},\delta}(r_{\mathrm{initial}}(\mathcal{M}^s_0, g^s_0), \mathfrak{t}(x))}\supset \mathcal{M}^s_0\,.
                \]
            \item For $m_1, m_2 = 0, \cdots, [\delta^{-1}]$ we have:
                \[
                    |\nabla^{m_1}\delta^{m_2}_\mathfrak{t}(g'^{,s} - g^s)| \leq \delta \rho^{-m_1-2m_2}, \quad |\nabla^{m_1}\partial^{m_2}_{\mathfrak{t}}(\partial'^{,s}_{\mathfrak{t}}-\partial^s_{\mathfrak{t}})|\leq \delta \rho^{1-m_1 -2m_2}\,.
                \]
            \item If $(\mathcal{M}^s_0, g^s_0)$ is homothetic to a quotient of the round sphere or the round cylinder, then $g'^{,s} = g^s$ and $\partial^{',s}_{\mathfrak{t}}=\partial^s_{\mathfrak{t}}$ on all of $\mathcal{M}^s$.
            \item $r_{\mathrm{rot},\delta}(a r_0, a^2 t) = a\cdot r_{\mathrm{rot}, \delta}(r_0, t)$ for all $a, r_0 > 0$ and $t \geq 0$.
        \end{enumerate}
    \end{theorem}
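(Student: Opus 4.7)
The plan is to take the proof of \cite[Theorem 5.12]{bamler_ricci_2019} as a template and verify that each ingredient it uses can be chosen $G$-invariantly, exploiting the fact (Theorem \ref{thm equivariance singular ricci flow}) that the $G$-action $\hat\rho^s$ on $\mathcal{M}^s$ is by time-preserving isometries commuting with $\partial_{\mathfrak{t}}^s$. By uniqueness of singular Ricci flows, any subset, function, or tensor defined \emph{intrinsically} from the geometry of $\mathcal{M}^s$ is automatically $\hat\rho^s$-equivariant. In particular, the set of $\epsilon$-canonical neighborhoods and their classification by type (neck, cap, closed component of approximately constant curvature) is isometry-invariant, so the $G$-invariance of $U^s_{S_2}$ and $U^s_{S_3}$, as produced in \cite{bamler_ricci_2019}, is immediate.

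The real work lies in producing a $G$-invariant spherical structure $\mathcal{S}^s$ and then a $G$-invariant perturbed pair $(g'^{,s},\partial'^{,s}_{\mathfrak{t}})$. In \cite{bamler_ricci_2019} the spherical structure is obtained by patching together local $S^2$-fibrations coming from the approximate $O(3)$-symmetry of long $\delta$-necks inside $\kappa$-solutions, and the rounded metric is obtained by averaging $g^s$ and $\partial^s_{\mathfrak{t}}$ along the local $O(3)$-actions and interpolating back to the original data via a cutoff depending only on the curvature scale $\rho$. First I would observe that the local foliation of a neck by approximate round $2$-spheres is canonically determined (for example, as level sets of an appropriately averaged curvature function), so $\hat\rho^s$ must send such a local fiber to a local fiber, and a singular fiber ($\mathbb{R}P^2$ or point) to a singular fiber of the same type. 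Using a $G$-invariant covering and a $G$-averaged partition of unity to glue these local fibrations yields an $S^2$-bundle satisfying condition (4) of the definition of $G$-invariant spherical structure. Because the cutoff is an isometry invariant and the local $O(3)$-averaging commutes with $\hat\rho^s$, the resulting $g'^{,s}$ and $\partial'^{,s}_{\mathfrak{t}}$ are $G$-invariant; should any auxiliary choice introduce a symmetry defect, a single normalized Haar average $\tilde g^{,s} = \int_G (\hat\rho^s_h)^* g'^{,s}\,dh$ removes it while preserving the convex estimates in (c) and the support statements in (a), (b). Property (d) is then inherited from the non-equivariant statement, and property (e) follows because $r_{\mathrm{rot},\delta}$ can be taken identical to the one produced in \cite{bamler_ricci_2019}.

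The principal obstacle is the simultaneous enforcement of transverse continuity along $s \in X$ and $G$-equivariance for every $s$. This requires organizing the parameter-dependent combinatorial changes in the neck decomposition via a $G$-invariant partition of unity on $X$ (so that gluings made near nearby parameter values agree up to $G$-equivariant isotopies), and, near singular spherical fibers whose stabilizers in $G$ are nontrivial, using slice-theorem arguments to align the local $O(3)$-actions supplied by $\kappa$-solution theory with the ambient $G$-action. I expect both ingredients to be handled by the equivariant tools collected in Appendix A, and the remaining continuity and derivative estimates to follow directly from those in \cite{bamler_ricci_2019} since Haar averaging commutes with differentiation and is norm-contracting in every weighted $C^k$ seminorm used there.
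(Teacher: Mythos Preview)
Your overall strategy---follow the proof of \cite[Theorem~5.12]{bamler_ricci_2019} and check $G$-invariance---matches the paper's, and your key observation that intrinsically defined objects are automatically $G$-equivariant is exactly what drives the argument. However, your execution diverges from the paper's in a way that introduces real gaps. The paper does not use $G$-invariant covers, $G$-averaged partitions of unity, or Haar averaging at all; instead it walks through the six steps of the Bamler--Kleiner construction (Lemmas~5.15, 5.20, 5.24, 5.27, 5.29 and the final assembly) and verifies in each that the object produced is \emph{uniquely} determined by the geometry, hence already $G$-invariant. For instance, the spherical fibers in the cylindrical region are the unique CMC spheres through each point, the rounded metric near Bryant tips is a canonical $O(3)$-average independent of the choice of chart, and the modified time vector field is singled out as the unique admissible vector field of minimal $L^2$-norm on each fiber (uniqueness requiring a short strict-convexity argument). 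No post-hoc averaging is needed, and none of the tools in Appendix~A are invoked here; that appendix is used only later, in the partial homotopy section.

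The specific gap in your plan is the treatment of the spherical structure. An $S^2$-fibration is not a tensor field, so it cannot be ``glued with a $G$-averaged partition of unity''; two local fibrations on an overlap either agree or they do not, and averaging the associated metrics does not produce a new fibration. The paper sidesteps this entirely by showing that the local fibrations (from CMC spheres, and from the canonical $O(3)$-structure near tips) already agree on overlaps because each is canonical. Your Haar-average fallback for $g'^{,s}$ and $\partial'^{,s}_{\mathfrak{t}}$ would preserve the estimates in~(c), but it only yields a metric compatible with $\mathcal{S}^s$ if $\mathcal{S}^s$ is already $G$-invariant---so it cannot rescue a non-equivariant spherical structure. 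The cleanest fix is to drop the averaging scheme and instead trace through Bamler--Kleiner's six lemmas, checking canonicality at each step as the paper does.
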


    \begin{proof}
        We shall follow closely the proof of \cite[Theorem~5.12]{bamler_ricci_2019}. Though not indicated explicitly, various geometric objects constructed \textit{loc. cit.} are canonical, and therefore lead to $G$-invariance of $\mathcal{R}$-structures. Here, we shall sketch its proof to verify the equivariance.\\

        \paragraph{\textbf{Step 1}} Construction of an equivariant cutoff function near almost extinct components.

            The first step is to construct an initial equivariant cutoff function $\eta_1: \bigcup_{s \in X} \mathcal{M}^s \rightarrow [0, 1]$ to indicate the region where the spherical structures will be defined. More precisely, we verify that in {\cite[Lemma~5.15]{bamler_ricci_2019}}, under the $G$-invariant assumptions, the cutoff function $\eta_1$ constructed in the proof is $G$-invariant.
                
            By definition, functions $\rho$, $r_{\mathrm{can}, \varepsilon}$ and $\hat \rho$ are $G$-invariant. Hence, in the proof of Lemma 5.15 in \cite{bamler_ricci_2019}, functions
            \begin{align}
                \eta^*_1(x) &:= \nu(a \cdot \frac{r_{\mathrm{can}, \varepsilon}(x)}{\hat \rho(x)})\,,\\
                \eta^{**}_1(x) &:= \nu(A \cdot \frac{\hat \rho(x)}{\mathcal{R}(\hat{\mathcal{C}})})\,,
            \end{align}
            are $G$-invariant. Note that $\nu: \mathbb{R} \rightarrow \mathbb{R}$ is a standard cutoff function.

            Since the time vector $\partial_{\mathfrak{t}}$ is $G$-invariant, for any $h \in G$ and $x, x' \in \mathcal{M}^s_t$, if $x' = h(x)$, then $x'(t') = h(x(t'))$ as long as $x$ survives until time $t'$. Therefore, $\eta'_1$ defined by
                \[
                    \eta'_1(x) := \int^t_{t^*_{\mathcal{C}}} \frac{\eta^*_1(x(t'))\eta^{**}_1(x(t'))}{\hat \rho^2(x(t'))}\mathrm{d}t'\,,
                \]
            is $G$-invariant and so is $\eta_1(x) = \nu(A \cdot \eta'_1(x))$.
            
            {\ }

        \paragraph{\textbf{Step 2}} Equivaraint modification in regions that are geometrically close to Bryant solitons.

            We check that in \cite[Lemma~5.20]{bamler_ricci_2019}, $(g'^{,s}_2)_{s \in X}$, $\eta_2$ and $(\mathcal{S}^s_2)_{s\in X}$ constructed in the proof are also $G$-invariant.
            
            \begin{enumerate}[label = (\roman*)]
                \item Apparently, $\eta_2(x):= \nu(2\eta_1(x))$ is $G$-invariant.

                \item Let's show that $(g'^{,s}_2)_{s \in X}$ are $G$-invariant.

                    As $\eta_1$, $\rho$, $r_{\mathrm{can}, \varepsilon}$ and $R$ are $G$-invariant, the set $E^s$ (the set of tips) therein is a $G$-invariance subset by definition.

                    For any $x \in E^s \cap \mathcal{M}^s_t$ and any $h \in G$, $x' = h.x$ is also in $E^s \cap \mathcal{M}^s_t$ and $(B(x, 10 D_0 \rho(x)), g^{s}_{t})$ is isometric to $(B(x', 10 D_0 \rho(x')), g^{s}_{t})$ via $h$. Note that $g''$ is constructed as the average of $g$  which is canonical and independent of the choice of $\varphi$. Hence, $g''$ and $h^*(g'')$ are the same when they are restricted to $B(x, 10 D_0 \rho(x))$.

                    By the arbitrariness of $x$ and $h$, $g''$ is $G$-invariant. Since the distance function $d$ is $G$-invariant, by definition, $g'''$ defined as a linear interpolation between $g$ and $g''$ w.r.t. distance function $d$ is $G$-invariant.

                    Finally, $g'_2$ defined a linear interpolation between $g$ and $g'''$ via a $G$-invariant cutoff function is again $G$-invariant.

                \item It follows from the construction of $g''$ that $(\mathcal{S}'^{,s})_{s \in X}$ are all $G$-invariant. Therefore, so are the restrictions $(\mathcal{S}_2^s)_{s \in X}$ to a $G$-invariant subset.
            \end{enumerate}{\ }

        \paragraph{\textbf{Step 3}} Equivariant modification in cylindrical regions.

            We show that in \cite[Lemma~5.24]{bamler_ricci_2019}, $(g'^{,s}_3)_{s \in X}$, $\eta_3$ and $(\mathcal{S}^s_3)_{s\in X}$ in the proof are $G$-invariance.

            \begin{enumerate}[label = (\roman*)]
                \item $\eta_3$ is $G$-invariant because $\eta_3(x) = \nu(2 \eta_2(x))$.
                \item To prove the $G$-invariance of $(g'^{,s}_3)_{s \in X}$, by definition, it suffices to show that $g''^{,s}$ therein is $G$-invariant.

                    Firstly, for any $x \in \mathcal{M}^s_t$ satisfying Assertion (c2) or (c3) of \cite[Lemma~5.20]{bamler_ricci_2019}, the corresponding CMC sphere or CMC-projective space $\Sigma_x$ w.r.t. $g'^{,s}_{2, t}$ is unique. Therefore, for any $h \in G$, $\Sigma_{h.x} = h.\Sigma_x$. Moreover, $g'_x = h^*(g'_{h.x})$ on $\Sigma_x$ by the canonicality of $\operatorname{RD}^2$, so the spherical structures $(\mathcal{S}'^{,s})_{s \in X}$ are $G$-invariant.

                    Next, the uniqueness of $Z_{\Sigma_x, N_x}$ implies that $h_*(Z_{\Sigma_x, N_x}) = Z_{\Sigma_{h(x)}, N_{h(x)}}$. By the arbitrariness of $x$ and $h$, it follows immediately that $g''^{,s}$ is $G$-invariant.

                \item The conclusion that $\mathcal{S}^s_3$ is $G$-invariant follows from the fact that $\mathcal{S}'^{,s}$ is $G$-invariant, which was proved in Part (ii).
            \end{enumerate}{\ }

        \paragraph{\textbf{Step 4}}{Equivariant modification of the time vector field}.

            We show that in \cite[Lemma~5.27]{bamler_ricci_2019}, $(g'^{,s}_4)_{s \in X}$, $\eta_4$, $(\mathcal{S}^s_4)_{s\in X}$ and $\partial'^{,s}_{\mathfrak{t}, 4}$ constructed in the proof are $G$-invariant.

            We follow the proof of the lemma to verify $G$-invariance.
            \begin{enumerate}[label = (\roman*)]
                \item We first show that the vector field $Z^s$ constructed in the proof of \cite[Lemma~5.27]{bamler_ricci_2019} is $G$-invariant. Note that its domain $U^s = \set{\eta_3 < 1} \cap \set{\rho_{g'_3} < \frac{1}{2}\alpha r_{\mathrm{can},\varepsilon}} \cap \mathcal{M}^s$ is $G$-invariant.

                Fix a spherical fiber $\mathcal{O}$ and $h \in G$, and in the proof, there exist admissible vector fields $Z'_{\mathcal{O}}$ and $Z'_{h.\mathcal{O}}$ with minimal $L^2$-norms. It suffices to show that $h_*(Z'_{\mathcal{O}}) = Z'_{h.\mathcal{O}}$.

                Recall that $h$ is an isomorphism, so $h_*(Z'_{\mathcal{O}})$ is an admissible vector field with the minimal $L^2$-norms as well. 

                Suppose by contradiction that $h_*(Z'_{\mathcal{O}}) \neq Z'_{h.\mathcal{O}}$, and we can construct a new vector field along $h.\mathcal{O}$:
                \begin{equation}
                    Z'' := \frac{h_*(Z'_{\mathcal{O}}) + Z'_{h.\mathcal{O}}}{2}\,.
                \end{equation}

                On the one hand, $Z''$ is admissible since the space of admissible vector fields along $h.\mathcal{O}$ is affine. On the other hand, by Cauchy-Schwarz inequality, we have
                \begin{equation}
                    \|Z''\|^2_{L^2} < (\|Z'_{h.\mathcal{O}}\|^2_{L^2} + \|h_*(Z'_{\mathcal{O}}\|^2_{L^2})/2\,,
                \end{equation}
                giving a contradiction that $h_*(Z'_{\mathcal{O}})$ and $Z'_{h.\mathcal{O}}$ have the minimal $L^2$-norm.
                
                Therefore, by the arbitrariness of $h$ and $\mathcal{O}$, $Z^s$ is $G$-invariant.            

                \item It follows immediately from Part (i) that $\partial'^{,s}_{\mathfrak{t}, 4} = \partial^s_{\mathfrak{t}} + Z^s$ is $G$-invariant.

                \item $\mathcal{S}^s_4$ is the restriction of $\mathcal{S}^s_3$ to a $G$-invariant domain $U' \cap \mathcal{M}^s$, so it is $G$-invariant.

                \item $\eta_4(x) := \nu(2 \eta_3(x))$ is apparently $G$-invariant.

                \item $(g'^{,4}_s)_{s \in X} = (g'^{,3}_s)_{s \in X}$ are $G$-invariant.
            \end{enumerate}{\ }

        \paragraph{\textbf{Step 5}} {Equivariant extension of the structure until the metric is almost round.}
        
            We show that in \cite[Lemma~5.29]{bamler_ricci_2019}, $(g'^{,s}_5)_{s \in X}$, $\eta_5$, $(\mathcal{S}^s_5)_{s\in X}$ and $\partial'^{,s}_{\mathfrak{t}, 5}$ constructed are $G$-invariant.

            \begin{enumerate}
                \item $\partial'^{,s}_{\mathfrak{t}, 5} = \partial'^{,s}_{\mathfrak{t}, 4}$ is obviously $G$-invariant by the previous lemma.
                \item Since $\eta_4$ is $G$-invariant, $g'_5 = g'_4$, $\eta_5 = 0$ and $\mathcal{S}_5 = \mathcal{S}_4$ on $\set{\eta_4} = 0$, they are all $G$-invariant there by the previous lemma.
                \item Let's consider the cutoff function $\eta_5$, $g'^{,5}$ and $\mathcal{S}_5$ on $\set{\eta_4 > 0}$.

                    Note that $\eta_4$ is $G$-invariant and so is the open subset $\set{\eta_4 > 0}$. Furthermore, for any connected component $\mathcal{C} \subset \mathcal{M}^t_s \cap \set{\eta_4 > 0}$ and $h \in G$, we have $t^{\min}_{\mathcal{C}} = t^{\min}_{h.\mathcal{C}}$, $t^{\max}_{\mathcal{C}} = t^{\max}_{h.\mathcal{C}}$ and thus, $h.U_{\mathcal{C}} = U_{h.\mathcal{C}}$.

                    The vector field $\tilde{\partial}^s_t = \hat \rho^2_g \cdot \partial'^{,s}_{\mathfrak{t}, 4}$ is $G$-invariant and so are its generating flow $\Phi$, $\tilde g_t$, $\tilde{g}_{4, t}$, $\tilde{\eta}_4$ and the shifted vector field $Z_t$ as defined therein.

                    As the volume normalized Ricci flow equation has unique solution, $\tilde g_{5, t} = \operatorname{RF}_{\tilde{\eta}_{4(t)A}}\tilde{g}_{4, t - \tilde{\eta}_4(t)A}$ and $\tilde{\eta}_5(t)$ are $G$-invariant. By definitions that $\tilde{g}_{5, t} = \Phi^*_t(\tilde{\rho}^{-2}_g g^{',s}_5)$ and $\tilde{\eta}_5(t) = \eta_5(\Phi_t(\mathcal{C}))$, we have $\eta_5$ and $g'^{,5}$ are $G$-invariant on $\set{\eta_4 > 0}$.

                    Since $\mathcal{S}^s_5$ is the push-forward of $\mathcal{S}^s_4|_{\Phi_{t - \tilde{\eta}_4(t)A}(\mathcal{C})}$ onto $\Phi_t(\mathcal{C})$ via $\Phi_t \circ \Phi^{-1}_{t - \tilde{\eta}_4(t)A}$, it is $G$-invariant as well on $\set{\eta_4 > 0}$.
            \end{enumerate}{\ }

        \paragraph{\textbf{Step 6}}{Modification of almost round components.}

            \begin{enumerate}[label = (\roman*)]
                \item Following the proof of \cite[Lemma~5.12]{bamler_ricci_2019}, we first consider $g'^{,s}$ and $\partial'^{,s}_{\mathfrak{t}}$.

                    Set $g''^{,s}_t|_{\mathcal{C}} = \operatorname{RD}^3(g'^{,s}_{5, t}|_{\mathcal{C}})$ on $\mathcal{M}^s \cap \set{\eta_5 > 0}$, and it is easy to see that it is a $G$-invariant metric. Hence, $g'^{,s}$ are $G$-invariant metrics by definition.

                    Then, similar to what we have shown in Part (i) of Step 4, the constructed vector fields $Z^s$ with minimal $L^2$-norm when restricted to time slices are $G$-invariant. So are $\partial''^{,s}_{\mathfrak{t}} = \partial''^{,s}_{\mathfrak{t},5} + Z''$ on $\set{\eta_5 > 0}$. By definition, $\partial'^{,5}_{\mathfrak{t}}$ are $G$-invariant.

                \item Now, let's verify that the construction of spherical structures is $G$-invariant.

                    By definition, $U^s_{S_3} = \set{\hat \rho_g < c \alpha r_{\mathrm{can}, \varepsilon}} \cap \set{\frac{1}{2}< \eta_5} \cap \mathcal{M}^s$ is apparently $G$-invariant.

                    Given the $G$-invariant $\mathcal{S}^s_5$, the restricting spherical structure $\mathcal{S}^s_6$ is also $G$-invariant since
                    \begin{equation}
                        \mathrm{domain}(\mathcal{S}^s_6) = (\mathrm{domain}(\mathcal{S}^s_5)\cap\set{\eta_5 < \frac{1}{4}})\cup(\set{\hat{\rho}_g < c \alpha r_{\mathrm{can},\varepsilon}}\cap\set{0 < \eta_5 < 1}\cap \mathcal{M}^s)\,.
                    \end{equation}

                    Since $U_{S_2}$ is the union of $\mathrm{domain}(\mathcal{S}^s_6)$ with all components $\mathcal{C} \subset \mathcal{M}^s_t \cap U^s_{S_3}$ intersecting $\mathrm{domain}(\mathcal{S}^s_6)$, it is a $G$-invariant subset as well.

                    Finally, $\mathcal{S}^s$ is extended from $\mathcal{S}^s_5$ on $U^s_{S_2}$ by the flow of $\partial'^{,s}_{\mathfrak{t}}$, so is a $G$-invariant spherical structure.
            \end{enumerate}
    \end{proof}

\section{Equivariant partial homotopies}

    In this section, we shall introduce the equivariant version of partial homotopies in \cite{bamler_ricci_2019}. Let us first fix a simplicial complex $K$ and consider a fiber bundle $E\to K$ whose fibers are smooth compact Riemannian 3-manifolds, which we view as a continuous family of $G$-invariant Riemannian manifolds $(M^s, g^s)_{s\in K}$. We then fix a sub-complex $L\subset K$ such that $(M^s, g^s)_{s\in L}$ contains all the CC-metrics. Let $(\mathcal{M}^s)_{s\in K}$ be a continuous family of singular Ricci flows whose time-0-slices $(\mathcal{M}_0^s, g_0^s)_{s\in K}$ is isometric to $(M^s, g^s)_{s\in K}$. Moreover, we fix  a transversely continuous family of $G$-invariant $\mathcal{R}$-structures
        \[\mathcal{R}^s = (g'^{,s}, \partial'^{,s}_t, U_{S_2}^s, U_{S_3}^s, \mathcal{S}^s)_{s\in K}\]
        for each $\mathcal{M}^s$. Fix some $T\geq 0$.
        
    \begin{definition}[Equivariant Partial homotopy]{\label{equivariant partial homotopy}}
     With the settings above, a partial homotopy $\{(Z^{\sigma}, (g_{s,t}^{\sigma})_{s\in\sigma, t\in [0,1]}, (\psi_s^{\sigma})_{s\in\sigma})\}_{\sigma\subset K}$ (\cite[Definition~7.4]{bamler_ricci_2019}) at time $T$ relative to $L$ is called $G$-invariant if
         \begin{itemize}
        \item[(i)] For all $s\in\sigma\subset K$, the images $\psi_s^{\sigma}(Z^{\sigma})$ is a $G$-invariant subset of $M_T^s$. 
        \item[(ii)] For all $s\in\sigma\subset K$ and $t\in [0,1]$, $(\psi_s^{\sigma})_*g^{\sigma}_{s,t}$ is a $G$-invariant metric on the image $\psi_s^{\sigma}(Z^{\sigma})$.
        \end{itemize}
    \end{definition}

    \begin{definition}{\label{induced action}}
        Consider an embedding $\psi:Z \to \mathcal{M}_T^s$ whose image $\psi(Z)$ is a G-invariant subset of $\mathcal{M}_T^s$. We define the induced group action $G\acts Z$ by
        \[ h.z := \psi^{-1}(h. \psi(z)) \]
        for all $h\in G$ and $z\in Z$. In particular, the induced group action commutes with the embedding $\psi$.
    \end{definition}
    \begin{remark}\label{G-invariance of induced action}
    $\psi_*g$ is a $G$-invariant metric on the image $\psi(Z)$ if and only if $(Z, g)$ is $G$-invariant with respect to the induced group action in Definition \ref{induced action}.
    \end{remark}

    In the sequel of this section, we shall adapt the results from Section 7 of \cite{bamler_ricci_2019} to the equivariant setting.

    \begin{proposition}[Equivariant modification of moving a partial homotopy backward in time]{\label{equivariant modification of moving a partial homotopy backwards in time}} 
        In \cite[Proposition~7.6]{bamler_ricci_2019}, Suppose that the preassigned partial homotopy 
            $$\{(Z^{\sigma}, (g^{\sigma}_{s,t})_{s\in\sigma, t\in [0,1]}, (\psi_s^{\sigma})\}_{\sigma\subset K}$$ 
        at time $T$ relative to $L$ is $G$-invariant in the sense of Definition \ref{equivariant partial homotopy}. Then, the resulting partial homotopy $\{(Z^{\sigma}, (\bar{g}^{\sigma}_{s,t})_{s\in\sigma, t\in [0,1]}, (\bar{\psi}_s^{\sigma})\}_{\sigma\subset K}$ at time $T'$ relative to $L$ given in the conclusion therein is also $G$-invariant. 
    \end{proposition}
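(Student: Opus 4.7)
The plan is to mirror the strategy used in the proof of Theorem~\ref{thm:equiv_round}: we run through Bamler--Kleiner's construction in \cite[Proposition~7.6]{bamler_ricci_2019} step by step and verify that each auxiliary object is built canonically from $G$-invariant data, so that the output inherits $G$-invariance automatically.

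The starting point is that for each $s \in \sigma \subset K$, the induced $G$-action on $Z^{\sigma}$ via $\psi_s^{\sigma}$ (Definition~\ref{induced action}) makes $(Z^{\sigma}, g^{\sigma}_{s,t})$ a $G$-invariant Riemannian manifold, by Remark~\ref{G-invariance of induced action} applied to the hypotheses. I would then observe that the new embedding $\bar\psi_s^{\sigma}$ in \cite[Proposition~7.6]{bamler_ricci_2019} is obtained by composing $\psi_s^{\sigma}$ with the flow of (a modification of) the time vector field $\partial'^{,s}_{\mathfrak{t}}$ from time $T$ back to $T'$. Since $\partial'^{,s}_{\mathfrak{t}}$ is $G$-invariant as part of the $\mathcal{R}$-structure $\mathcal{R}^s$, its flow commutes with the isometric $G$-action $\hat\rho^s$; hence $\bar\psi_s^{\sigma}(Z^{\sigma})$ is $G$-invariant, verifying condition (i) of Definition~\ref{equivariant partial homotopy}. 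Moreover, the induced $G$-action on $Z^{\sigma}$ coming from $\bar\psi_s^{\sigma}$ coincides with the one coming from $\psi_s^{\sigma}$, so no ambiguity arises in passing from time $T$ to time $T'$.

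For condition (ii), the new metric $\bar g^{\sigma}_{s,t}$ is assembled in \cite[Proposition~7.6]{bamler_ricci_2019} by interpolating between the pullback of the old $g^{\sigma}_{s,t}$ under the backward flow and the ambient flow metric $g'^{,s}$ at time $T'$, using cutoff functions supported in the region where the $\mathcal{R}$-structure controls the geometry. Both ingredients are $G$-invariant: the first by equivariance of the pullback combined with the hypothesis on $g^{\sigma}_{s,t}$, and the second by Theorem~\ref{thm:equiv_round}. The cutoff functions depend only on $G$-invariant quantities such as the curvature radius $\rho_{g'^{,s}}$, the time function $\mathfrak{t}$, and distances to canonically defined subsets of $\mathcal{M}^s$, so they are $G$-invariant. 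Hence the interpolated family $(\bar g^{\sigma}_{s,t})_{s\in\sigma, t\in [0,1]}$, viewed under $\bar\psi_s^{\sigma}$, is a $G$-invariant metric on $\bar\psi_s^{\sigma}(Z^{\sigma})$, which is exactly condition (ii).

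The main obstacle is bookkeeping rather than conceptual: Bamler--Kleiner's construction involves several nested backward-in-time flows, Ricci flow evolutions on almost-round components, and interpolations between them. At each stage I would have to confirm, as in Steps 1--6 of the proof of Theorem~\ref{thm:equiv_round}, that every cutoff function is built from $G$-invariant data, that every averaging or $L^2$-minimization procedure selects a canonical object preserved by the $G$-action, and that every uniqueness statement (such as for CMC spheres, normalized Ricci flow solutions, or vector fields of minimal $L^2$-norm) guarantees equivariance of the chosen object. Once this verification is carried out uniformly across simplices and is shown to be compatible with the face restrictions required by a partial homotopy, the $G$-invariance propagates through the whole construction and the proposition follows.
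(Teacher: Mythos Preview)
Your overall strategy---follow Bamler--Kleiner's construction and check that each piece is built from $G$-invariant data---is exactly what the paper does, and your second paragraph correctly isolates the key point: $\bar\psi_s^{\sigma}$ is obtained by flowing $\psi_s^{\sigma}$ along the $G$-invariant vector field $\partial'^{,s}_{\mathfrak{t}}$, so its image stays $G$-invariant.

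However, your third and fourth paragraphs substantially over-complicate what actually happens in \cite[Proposition~7.6]{bamler_ricci_2019}. There are no cutoff functions, no interpolations governed by curvature radius, no $L^2$-minimizations, no CMC spheres, and no normalized Ricci flow evolutions in this particular construction---those all belong to the rounding process (Theorem~\ref{thm:equiv_round}) and to the other partial-homotopy moves, not to the backward-in-time move. The new metric in \cite[Proposition~7.6]{bamler_ricci_2019} is simply a concatenation,
\[
\bar{g}^{\sigma}_{s,t} =
\begin{cases}
(\psi^{\sigma}_s)^*_{T'+2t(T-T')}\, g_{T'+2t(T-T')} & t\in[0,\tfrac12],\\
g^{\sigma}_{s,2t-1} & t\in[\tfrac12,1],
\end{cases}
\]
so the verification of condition~(ii) is immediate from the $G$-invariance of the flow maps $(\psi^{\sigma}_s)_{t'}$, of the ambient metrics $g_{t'}$, and of the original $g^{\sigma}_{s,t}$. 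The paper's proof is accordingly only a few lines. Your proposal would still arrive at the correct conclusion, but the ``main obstacle'' you anticipate does not exist for this proposition; the bookkeeping you describe is needed elsewhere (Propositions~\ref{equivariant modification of enlarging a partial homotopy} and~\ref{equivariant modification of removing a disk from a partial homotopy}), not here.
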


    \begin{proof}
        The map $\bar{\psi}_s^{\sigma}$ is defined by
            \[ \bar{\psi}_s^{\sigma} := (\psi^{\sigma}_t)_{T'}\]
        with $(\psi^{\sigma}_t)_{t'}(z) := (\psi^{\sigma}_s(z))^{\partial'^{,s}_{\mathfrak{t}}}(t')$. Since the vector field $\partial'^{,s}_{\mathfrak{t}}$ is $G$-invariant, it follows that if $U\subset Z^{\sigma}, \sigma\subset K$, and $\psi_s^{\sigma}(U)$ is a $G$-invariant subset, then  $\bar{\psi}_s^{\sigma}(U)$ is also a G-invariant subset.  Next, by the construction of the new metric 
            \begin{align*}
                \bar{g}^{\sigma}_{s,t} :=
                \begin{cases}
                    (\psi^{\sigma}_s)^*_{T'+2t(T-T')}g_{T'+2t(T-T')} \quad &\text{if}\quad t\in [0,\frac{1}{2}]\\
                    g^{\sigma}_{s, 2t-1} \quad &\text{if}\quad t\in [\frac{1}{2}, 1],
                \end{cases}
            \end{align*}
            it follows from the G-invariance of $\psi^{\sigma}_s$, $g_{T'+2t(T-T')}$, $g^{\sigma}_{s, 2t-1}$ and the definition of $\bar{\psi}_s^{\sigma}$ that the image $\bar{\psi}_s^{\sigma}(Z^{\sigma})$ is a $G$-invariant subset and $(\bar{\psi}_s^{\sigma})_*\bar{g}^{\sigma}_{s,t}$ is a G-invariant metric on the image $\bar{\psi}_s^{\sigma}(Z^{\sigma})$.
    \end{proof}

    It is straightforward to see that the proof of \cite[Proposition~7.8]{bamler_ricci_2019} carries over to the equivariant setting directly. Thus we have

    \begin{proposition}[Equivariant modification of passing to a simplicial refinement]
        Let $(K', L')$ be simplicial refinement of $(K, L)$. Suppose that the preassigned partial homotopy 
    $$\{(Z^{\sigma}, (g^{\sigma}_{s,t})_{s\in\sigma, t\in [0,1]}, (\psi_s^{\sigma})\}_{\sigma\subset K}$$ at time $T$ relative to $L$ in \cite[Proposition~7.8]{bamler_ricci_2019} is $G$-invariant. Then, the resulting partial homotopy
    $$\{(\bar{Z}^{\sigma}, (\bar{g}^{\sigma}_{s,t})_{s\in\sigma, t\in [0,1]}, (\bar{\psi}_s^{\sigma})\}_{\sigma\subset K'}$$
    at time $T$ relative to $L'$
    \end{proposition}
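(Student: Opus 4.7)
The plan is to mirror the strategy used above for Proposition~\ref{equivariant modification of moving a partial homotopy backwards in time}: since the construction of the refined partial homotopy in the proof of \cite[Proposition~7.8]{bamler_ricci_2019} introduces no new non-canonical geometric object, it suffices to check that every piece of data defining the refined structure is obtained from the original, already $G$-invariant, data by operations that commute with the $G$-action, and then to invoke Remark~\ref{G-invariance of induced action}.

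First, I would recall from the proof of \cite[Proposition~7.8]{bamler_ricci_2019} that for each simplex $\sigma \in K'$ one selects a parent simplex $\tau = \tau(\sigma) \in K$ with $\sigma \subset \tau$, and then sets $\bar{Z}^{\sigma} := Z^{\tau}$, $\bar{\psi}_s^{\sigma} := \psi_s^{\tau}$ for $s \in \sigma$, while $\bar{g}^{\sigma}_{s,t}$ is defined either as $g^{\tau}_{s,t}$ itself or via a canonical linear reparametrization in the homotopy parameter $t$. By hypothesis, for every $\tau \subset K$ and every $s \in \tau$ the image $\psi_s^{\tau}(Z^{\tau}) \subset \mathcal{M}_T^s$ is $G$-invariant and $(\psi_s^{\tau})_* g^{\tau}_{s,t}$ is a $G$-invariant metric on this image. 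Consequently, for every $\sigma \in K'$ and every $s \in \sigma$, the image $\bar{\psi}_s^{\sigma}(\bar{Z}^{\sigma})$ coincides with $\psi_s^{\tau}(Z^{\tau})$ and is therefore $G$-invariant, while $(\bar{\psi}_s^{\sigma})_* \bar{g}^{\sigma}_{s,t}$ is a $G$-invariant metric because linear reparametrization in $t$ preserves tensorial $G$-invariance.

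Finally, I would verify the compatibility conditions along faces in $K'$ demanded by \cite[Definition~7.4]{bamler_ricci_2019}, together with the relative condition over $L'$: both are inherited directly from the corresponding compatibilities at the level of $K$ and $L$, since the parent assignment $\sigma \mapsto \tau(\sigma)$ is consistent with face inclusions and the subcomplex $L'$ refines $L$. I do not expect a real obstacle: the entire verification is bookkeeping that matches the notation of \cite[Proposition~7.8]{bamler_ricci_2019} with our Definition~\ref{equivariant partial homotopy} and confirms that every piece of data in the refined partial homotopy either agrees pointwise with a piece of the original partial homotopy or is a canonical, hence $G$-equivariant, transformation thereof. In particular, no averaging, no $L^2$-minimization and no PDE is invoked in the refinement (in contrast with Step~4 of the proof of Theorem~\ref{thm:equiv_round}), so no additional equivariance argument is needed beyond those already used in the previous proposition.
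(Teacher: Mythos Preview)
Your proposal is correct and matches the paper's approach: the paper does not give a proof at all, stating just before the proposition that ``It is straightforward to see that the proof of \cite[Proposition~7.8]{bamler_ricci_2019} carries over to the equivariant setting directly.'' Your write-up simply unpacks this assertion by noting that the refined data $(\bar Z^{\sigma}, \bar g^{\sigma}_{s,t}, \bar\psi^{\sigma}_s)$ are obtained from the original data by restriction to a parent simplex (possibly with a reparametrization in $t$), so $G$-invariance is inherited verbatim.
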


    \begin{proposition}[Equivariant modification of enlarging a partial homotopy]{\label{equivariant modification of enlarging a partial homotopy}} Suppose that the preassigned partial homotopy 
    $$\{(Z^{\sigma}, (g^{\sigma}_{s,t})_{s\in\sigma, t\in [0,1]}, (\psi_s^{\sigma})\}_{\sigma\subset K}$$ at time $T$ relative to $L$ in \cite[Proposition~7.9]{bamler_ricci_2019} is $G$-invariant. Fix $\sigma\subset K$, we assume in addition that the embeddings $\hat{\psi}_s^{\sigma}(\hat{Z}_s^{\sigma})$ are $G$-invariant subsets of $\mathcal{M}_T^s$.
    Then, the resulting partial homotopy
    \[\{(Z^{\sigma'}, (g^{\sigma'}_{s,t})_{s\in\sigma', t\in [0,1]}, (\psi_s^{\sigma'})\}_{\sigma'\subset K, \sigma'\neq\sigma}\cup \{(\hat{Z}^{\sigma}, (\hat{g}^{\sigma}_{s,t})_{s\in\sigma, t\in [0,1]}, (\hat{\psi}_s^{\sigma})\}\]
    at time $T$ relative to $L$ constructed therein is $G$-invariant PSC conformal in the sense as in Appendix \ref{app:1}.
    \end{proposition}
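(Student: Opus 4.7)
The plan is to follow the construction in \cite[Proposition~7.9]{bamler_ricci_2019} stage by stage and verify $G$-equivariance at each stage, exactly as was done in Proposition \ref{equivariant modification of moving a partial homotopy backwards in time}. The three inputs we exploit are: (i) the given partial homotopy on $Z^{\sigma}$ is $G$-invariant; (ii) by hypothesis each enlarged image $\hat{\psi}_s^{\sigma}(\hat{Z}_s^{\sigma})$ is a $G$-invariant subset of $\mathcal{M}_T^s$, so Definition \ref{induced action} furnishes an induced $G$-action on $\hat{Z}^{\sigma}$ with respect to which, by Remark \ref{G-invariance of induced action}, the task reduces to checking $G$-invariance of the new family $\hat{g}_{s,t}^{\sigma}$; (iii) the ambient $\mathcal{R}$-structure $\mathcal{R}^s$ is $G$-invariant by Theorem \ref{thm:equiv_round}, as is the singular Ricci flow $\mathcal{M}^s$ itself by Theorem \ref{thm equivariance singular ricci flow}.

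Concretely, I would decompose $\hat{Z}^{\sigma}$ into the old piece $Z^{\sigma}$ and the collar region where the enlargement takes place. On $Z^{\sigma}$ we have $\hat{g}_{s,t}^{\sigma}=g_{s,t}^{\sigma}$, which is $G$-invariant by (i). On the collar the metric is built from canonical ingredients pulled back through $\hat{\psi}_s^{\sigma}$: the ambient metric $g'^{,s}$ on the spherical-symmetric or constant-curvature chunks of $\mathcal{M}_T^s$ selected by the $\mathcal{R}$-structure, together with smooth interpolations controlled by the $G$-invariant scale functions $\rho$, $r_{\mathrm{can},\varepsilon}$, $\mathfrak{t}$, and the distance to $\partial Z^{\sigma}$. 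Each such ingredient is $G$-invariant by (iii) and by the canonical nature of the constructions in \cite{bamler_ricci_2019}, and the embedding $\hat{\psi}_s^{\sigma}$ is $G$-equivariant by construction of the induced action in (ii), so pushforwards and pullbacks preserve $G$-invariance.

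The one step that requires nontrivial input is the PSC conformal modification that turns the enlarged family into a PSC conformal homotopy. Here I would appeal to Appendix \ref{app:1}, which supplies the equivariant counterpart of the analytic step: the conformal factor is obtained as the unique solution of a canonical $G$-invariant semilinear elliptic equation on the collar with $G$-invariant boundary data, so uniqueness forces the factor to be $G$-invariant (equivalently, one averages the factor over $G$ and invokes uniqueness). This is the main obstacle; once the equivariant existence/uniqueness statement from Appendix \ref{app:1} is in hand, the rest of the argument is a routine bookkeeping exercise identical to \cite[Proposition~7.9]{bamler_ricci_2019}, modulo verifying $G$-invariance of each auxiliary cutoff, continuity in $s$, and compatibility on the faces of $\sigma$.
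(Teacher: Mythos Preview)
Your high-level strategy is sound—equip $\hat Z^{\sigma}$ with the induced $G$-action from Definition~\ref{induced action} and verify invariance of $\hat g_{s,t}^{\sigma}$ stage by stage—but you have misidentified the actual construction in \cite[Proposition~7.9]{bamler_ricci_2019}, and with it the genuine obstacle.

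The family $\hat g_{s,t}^{\sigma}$ is \emph{not} built from ``canonical ingredients'' such as scale functions $\rho$, $r_{\mathrm{can},\varepsilon}$ or the distance to $\partial Z^{\sigma}$. Rather, one first pulls back the boundary data $k_{s,t} = ((\psi_s^{\tau})^{-1}\circ\hat\psi_s^{\sigma})^* g_{s,t}^{\tau}$ for $s\in\tau\subset\partial\sigma$, and then \emph{extends} this family over all of $\sigma$ (and over the new components of $\hat Z^{\sigma}$), case by case, using the metric-extension results \cite[Propositions~6.15 and~6.18]{bamler_ricci_2019}. Those extensions are not canonical: their proofs involve genuine choices, so $G$-invariance does not come for free. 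The real content of the present proposition is therefore (a) checking that the boundary family $k_{s,t}$ is $G$-invariant, which requires observing that the induced actions on $Z^{\tau}$ and on $\hat Z^{\sigma}$ are compatible through the inclusion $\iota^{\sigma}$ and the embeddings $\psi_s^{\tau},\hat\psi_s^{\sigma}$; and (b) invoking the \emph{equivariant} versions of those extension propositions, namely Proposition~\ref{prop:equiv-symm-met} and its companion for \cite[Proposition~6.18]{bamler_ricci_2019}, proved separately in Appendix~A. Your proposal mentions neither the boundary family $k_{s,t}$ nor these equivariant extension results, and the appeal to canonicity would not survive contact with the actual construction.

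A smaller point: the PSC-conformal step does not rest on uniqueness of a conformal factor. The PSC-conformal condition only asks for the existence of \emph{some} $w$ satisfying a linear differential inequality and boundary conditions, and Lemma~\ref{lem equiv PSC conformal} works by averaging an arbitrary admissible $w$ over $G$ and directly verifying that the average is still admissible—no uniqueness is invoked or available.
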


    \begin{proof}
        We follow closely from the original proof. Firstly, by Definition \ref{induced action} the induced action on $\hat{Z}^{\sigma}$ is compatible with the induced action on $Z^{\sigma}$ via the embedding $\iota^{\sigma}:Z^{\sigma}\to\hat{Z}^{\sigma}$. Note that by Remark \ref{G-invariance of induced action}, $g_{s,t}^{\tau}$ is a $G$-invariant metric on $Z^{\tau}$ for all $s\in\tau\subset\partial\sigma$.   Hence the continuous family of metrics 
            \[k_{s,t} = ((\psi_s^{\tau})^{-1}\circ\hat{\psi}_s^{\sigma})^*g_{s,t}^{\tau}\]
        constructed on $\hat{Z}^{\sigma}$ for all $s\in\tau\subset\partial\sigma$, $t\in [0,1]$ in the original proof is $G$-invariant on $\hat{Z}^{\sigma}$, as the group action maps commute with $\psi_s^{\tau}$ and $\hat{\psi}_s^{\sigma}$ by compatibility. We next check that the metrics $(\hat{g}^{\sigma}_{s,t})_{s\in\sigma, t\in [0,1]}$ constructed in the original proof are $G$-invariant on $\hat{Z}^{\sigma}$ case-by-case.\\
     
        \begin{itemize}
        \item[\textsl{Case 1a}:]     
            Observe that $k_{s,t}$ is $G$-invariant for all $s\in\partial\sigma$, $t\in[0,1]$ and that $(\hat{\psi}_s^{\sigma})^*g'^{,s}_t$ is also  $G$-invariant on $\hat{Z}^{\sigma}$ by Remark \ref{G-invariance of induced action} for $s\in\sigma$. Hence the family of metrics $(\hat{g}^{\sigma}_{s,t})_{s\in\sigma, t\in [0,1]}$ constructed in the original proof using \cite[Proposition~6.15]{bamler_ricci_2019} is $G$-invariant by Proposition \ref{prop:equiv-symm-met}.

        \item[\textsl{Case 1b}:] 
            Since $\mathcal{S}^s$ is $G$-invariant for any $s\in A$, the spherical structure $\mathcal{S}^{',s}$ obtained by pull back is also $G$-invariant with respect to the action induced by the embedding $\hat{\psi}^{\sigma}|_{Y}$. Moreover, $k_{s,t}$ is $G$-invariant for all $s\in\partial\sigma$, $t\in[0,1]$. Thus Proposition A.2 implies that the family of metrics $(\hat{g}^{\sigma}_{s,t})_{s\in\sigma, t\in [0,1]}$ constructed in the original proof using \cite[Proposition~6.18]{bamler_ricci_2019} is $G$-invariant by Proposition A.2.

        \item[\textsl{Case 2}:] 
            The $G$-invariance of $(\hat{g}^{\sigma}_{s,t})_{s\in\sigma, t\in [0,1]}$ follows from the $G$-invariance of $(k_{s,t})_{s\in\sigma, t\in [0,1]}$.\\
        \end{itemize}
        Moreover, it follows from Lemma \ref{lem equiv PSC conformal} that the new partial homotopy is $G$-invariant PSC-conformal.
     
    \end{proof}

    \begin{proposition}[Equivariant modification of removing disks from a partial homotopy]{\label{equivariant modification of removing a disk from a partial homotopy}}
        Suppose that in the assumptions of \cite[Proposition~7.21]{bamler_ricci_2019}:     \begin{itemize}
        \item[(1)] The preassigned partial homotopy $\{(Z^{\sigma}, (g^{\sigma}_{s,t})_{s\in\sigma, t\in [0,1]}, (\psi_s^{\sigma}))\}_{\sigma\subset K}$ at time $T$ relative to $L$  is  $G$-invariant in the sense of Definition \ref{equivariant partial homotopy}.       
        \item[(2)]  The embeddings $\{(\nu_{s,j}: D^3(1)=: D^3\to \mathcal{M}_T^s)_{s\in\sigma_0}\}_{1\leq j\leq m}$ are $G$-invariant subsets of $\mathcal{M}_T^s$. 
        \end{itemize}

    Then, the resulting partial homotopy
    \[\{(\tilde{Z}^{\sigma}, (\tilde{g}^{\sigma}_{s,t})_{s\in\sigma, t\in [0,1]}, (\tilde{\psi}_s^{\sigma}))\}_{\sigma\subset K}\]
    at time $T$ relative to $L$ constructed in \cite[Proposition~7.21]{bamler_ricci_2019} is $G$-invariant.
    \end{proposition}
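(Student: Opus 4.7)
The plan is to mirror the strategy used in Propositions \ref{equivariant modification of moving a partial homotopy backwards in time}--\ref{equivariant modification of enlarging a partial homotopy}: I would follow the construction of \cite[Proposition~7.21]{bamler_ricci_2019} step by step, and verify at each stage that under the $G$-invariance hypotheses (1) and (2), every object produced by the construction is $G$-invariant with respect to the appropriate induced action of Definition \ref{induced action}.

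First, I would fix the induced group actions. Hypothesis (1) provides a $G$-action on each $Z^{\sigma}$ via $\psi_s^{\sigma}$, while hypothesis (2) yields a $G$-action on each disk $D^3$ via $\nu_{s,j}$. The new domain $\tilde{Z}^{\sigma}$ in \cite[Proposition~7.21]{bamler_ricci_2019} is built from $Z^{\sigma}$ by excising neighborhoods of the disks and regluing standard caps; since the $G$-actions on $Z^{\sigma}$ and on each $D^3$ agree on the overlap (both are induced from the ambient $G$-action on $\mathcal{M}_T^s$), they assemble unambiguously to a $G$-action on $\tilde{Z}^{\sigma}$. With respect to this action, the new embedding $\tilde{\psi}_s^{\sigma}$ is equivariant, so $\tilde{\psi}_s^{\sigma}(\tilde{Z}^{\sigma})\subset\mathcal{M}_T^s$ is automatically $G$-invariant.

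The bulk of the work lies in checking $G$-invariance of the family $(\tilde{g}^{\sigma}_{s,t})$. These metrics are constructed in \cite[Proposition~7.21]{bamler_ricci_2019} by interpolating the preassigned $G$-invariant metrics $g^{\sigma}_{s,t}$ with standard rotationally symmetric cap metrics near each removed disk, controlled by cutoff functions that depend only on distance to $G$-invariant subsets. Hypothesis (1), together with the $G$-invariance of the disks from hypothesis (2), then propagates through each interpolation. Whenever the construction invokes a canonical choice --- such as a rotationally symmetric cap metric, an $L^2$-minimizing admissible vector field, or a fiberwise average --- uniqueness combined with equivariance of the input data forces $G$-invariance of the output, by the same Cauchy--Schwarz contradiction argument as in Step 4(i) of the proof of Theorem \ref{thm:equiv_round}. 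Any remaining smoothing or PSC-conformal perturbation can be upgraded to an equivariant one via Proposition \ref{prop:equiv-symm-met} and Lemma \ref{lem equiv PSC conformal} from Appendix \ref{app:1}.

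The main obstacle, as in the proof of Proposition \ref{equivariant modification of enlarging a partial homotopy}, is the book-keeping of several distinct induced actions --- on $Z^{\sigma}$, on $\tilde{Z}^{\sigma}$, on each $D^3$, and on $\mathcal{M}_T^s$ --- and the verification that every gluing, restriction, or pullback map appearing in the construction intertwines the two relevant actions. By Remark \ref{G-invariance of induced action}, each such check reduces to confirming that the image of the map in question is a $G$-invariant subset of its target, which in turn follows directly from the hypotheses and from the preceding steps of the construction. No genuinely new analytic ingredient beyond those already developed in Section \ref{sec:rf}, the preceding section on rounding, and Appendix \ref{app:1} should be required.
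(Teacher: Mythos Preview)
Your high-level plan is sound and matches the paper's overall philosophy, but it misses one place where the construction in \cite[Proposition~7.21]{bamler_ricci_2019} must be genuinely \emph{modified}, not merely verified.  In the proof of \cite[Claim~7.29]{bamler_ricci_2019}, Bamler--Kleiner choose a continuous family of diffeomorphisms $(\Phi_u:D^3(1.99)\to D^3(1.99))_{u\in(0,1]}$ to push the metric inward; in the original paper these are arbitrary radial-type diffeomorphisms.  For the equivariant statement one must \emph{impose} from the outset that $\Phi_u(x)=f(x,u)\,x$ with $f(h.x,u)=f(x,u)$ for all $h\in G$, and then track the induced $G$-action through the composition $\mu\circ\Phi_u$ rather than through $\mu$ alone.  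Your proposal says you would ``follow the construction \ldots\ step by step and verify \ldots\ $G$-invariance'', but this particular step does not come out $G$-invariant unless you first alter the choice of $\Phi_u$; nothing in the uniqueness/Cauchy--Schwarz toolkit you cite forces it.

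A second, smaller gap: the rounding of $h_{s,t}$ to $h'_{s,t}$ in \cite[Claim~7.26]{bamler_ricci_2019} proceeds via the conformal exponential map, and its equivariance is handled by Proposition~\ref{prop:equiv-round-met}, not by Proposition~\ref{prop:equiv-symm-met} or the $L^2$-minimization argument you mention (the latter plays no role in this proposition).  Finally, note that $\tilde Z^{\sigma_0}$ is obtained simply by excising $\mu(\operatorname{Int}D^3(1))$ from $Z^{\sigma_0}$---there is no ``regluing of standard caps'' at the level of the domain; the cap geometry enters only in the metric construction.
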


    \begin{proof}
        We follow closely from the original proof. Since the modification is constructed locally around each disk, we may only consider an embedding disk $\set{\nu_s:D^3(1) \rightarrow \mathcal{M}^s_T}_{s\in \sigma_0}$ and the subgroup $\set{g \in G| g.(\nu_s(D^3)) = \nu_s(D^3)}$, still denoted by $G$ for simplicity. The modification of other disks could be either induced by a group action on $\nu_s(D^3)$ or constructed by induction.

        Firstly the extension $(\bar{\nu}_s: D^3(2)\to \mathcal{M}^s)_{s\in U}$ in the original proof can be constructed in a way that the embedded image of $\bar{\nu}_s$ consists of a union of fibers of $U_{S_2}^s$. In this way the embedded image $\bar{\nu}_s(D^3(2))$ is still a $G$-invariant subset of $\mathcal{M}_T^s$. Following the original proof, we replace $(g^{\sigma_0}_{s,t})_{s\in\sigma, t\in [0,1]}, (\psi_s^{\sigma_0})_{s\in\sigma_0}$ by $(\bar{g}^{\sigma_0}_{s,t})_{s\in\sigma, t\in [0,1]}, (\bar{\psi}_s^{\sigma_0})_{s\in\sigma_0}$ so that after the replacement the embedding $\mu:= (\psi_s^{\sigma_0})^{-1}\circ\bar{\nu}_s$ is constant in $s$. We now make the following claim:
        \begin{claim}{\label{claim G-invariant mu}}
            The embedding $\mu: D^3(2)\to Z^{\sigma_0}$ is $G$-invariant with respect to the group actions induced by $\psi_s^{\sigma_0}: Z^{\sigma_0}\to \mathcal{M}_T^s$ for all $s\in\sigma_0$. 
        \end{claim}
        \begin{proof}[Proof of Claim \ref{claim G-invariant mu} ]
            To see why this is true, we fix $s\in\sigma_0$, a group element $h\in G$ and an arbitrary pair of points $x, y\in D^3(2)$ such that $\bar{\nu}_s(y) = h.\bar{\nu}_s(x)$.  Then with respect to the group action  induced by $\psi_s^{\sigma_0}: Z^{\sigma_0}\to \mathcal{M}_T^s$, we have
            \begin{align*}
                h.\mu(x) &= (\psi_s^{\sigma_0})^{-1}(h.(\psi_s^{\sigma_0} (\mu(x))))\\
                &=  (\psi_s^{\sigma_0})^{-1}(h.\bar{\nu}_s(x))\\
                &=  (\psi_s^{\sigma_0})^{-1}(\bar{\nu}_s(y))\\
                &= \mu(y).
            \end{align*}
            Since $\nu_s(D^3(1))$ is a $G$-invariant subsets of $\mathcal{M}_T^s$, we can verify the assertions for $\mu(D^3(1))$ similary. This implies the claim.
         \end{proof}
     
        In the sequel we also write $\nu_s$ instead of $\bar{\nu}_s$ as in the original proof. We next verify that the new partial homotopy $\{(\tilde{Z}^{\sigma}, (\tilde{g}^{\sigma}_{s,t})_{s\in\sigma, t\in [0,1]}, (\tilde{\psi}_s^{\sigma}))\}_{\sigma\subset K}$ constructed in the original proof is $G$-invariant case by case.\\

        \paragraph{\textbf{Case 1}: $\sigma_0\subsetneq\sigma_1$ for some simplex $\sigma_1\subset K$.} \

            From the original proof, the new partial homotopy $\{(\tilde{Z}^{\sigma}, (\tilde{g}^{\sigma}_{s,t})_{s\in\sigma, t\in [0,1]}, (\tilde{\psi}_s^{\sigma})\}_{\sigma\subset K}$ is given by
                \[\tilde{Z}^{\sigma_0} := Z^{\sigma_0}\setminus\mu(\text{Int}D^3(1)),\quad \tilde{\psi}_s^{\sigma_0}:= \psi_s^{\sigma_0}\big|_{\tilde{Z}^{\sigma_0}},\quad  \tilde{g}^{\sigma_0}_{s,t} := g^{\sigma_0}_{s,t}\big|_{\tilde{Z}^{\sigma_0}}\]
            and $(\tilde{Z}^{\sigma}, (\tilde{g}^{\sigma}_{s,t})_{s\in\sigma, t\in [0,1]}, (\tilde{\psi}_s^{\sigma})) := (Z^{\sigma}, (g^{\sigma}_{s,t})_{s\in\sigma, t\in [0,1]}, (\psi_s^{\sigma}))$ for all $\sigma\neq\sigma_0$. Therefore, it is sufficient to verify that $\tilde{\psi}_s^{\sigma}(\tilde{Z}^{\sigma_0})$ is a $G$-invariant subset of $\mathcal{M}_T^s$. But by Claim \ref{claim G-invariant mu},  $\mu(\text{Int}D^3(1))$ is $G$-invariant in $Z^{\sigma_0}$ with respect to the induced action by $\tilde{\psi}_s^{\sigma_0}$ for all $s\in\sigma_0$. This implies the assertion. \\

        \paragraph{\textbf{Case 2}: $\sigma_0$ is a maximal simplex of $K$.}\
         
            By  \cite[Lemma~7.22]{bamler_ricci_2019}, it is sufficient to verify that the new partial homotopy 
                \[\{(\tilde{Z}^{\sigma}, (\tilde{g}^{\sigma}_{s,t})_{s\in\sigma, t\in [0,1]}, (\tilde{\psi}_s^{\sigma}))\}_{\sigma\subset K}\]
            constructed therein is $G$-invariant. We first note that
                \[\tilde{Z}^{\sigma} := \begin{cases}
                    Z^{\sigma}\quad &\text{if}\ \sigma\neq \sigma_0\\
                    Z^{\sigma_0}\setminus\mu(\text{Int}D^3(1))\quad &\text{if}\ \sigma = \sigma_0
                \end{cases}
                ,\quad \tilde{\psi}_s^{\sigma_0}:= \psi_s^{\sigma_0}\big|_{\tilde{Z}^{\sigma_0}}.\]
            Hence as in the proof of the previous proposition, it follows from Claim \ref{claim G-invariant mu} that $\tilde{\psi}_s^{\sigma_0}(\tilde{Z}^{\sigma} )$ is $G$-invariant in $\mathcal{M}_T^s$. Therefore we only need to verify that $(\tilde{\psi}_s^{\sigma_0})_*\tilde{g}^{\sigma}_{s,t}$ are $G$-invariant metrics on  $\tilde{\psi}_s^{\sigma_0}(\tilde{Z}^{\sigma} )$.\\

        \paragraph{\textbf{Step 1}} We first verify that the family of metrics $(h_{s,t})_{s\in U, t\in [0,1]}$ on $D^3(1.99)$ constructed in  \cite[Claim~7.25]{bamler_ricci_2019} is $G$-invariant with respect to the group action induced by the embedding $\mu: D^3(2)\to Z^{\sigma_0}$. We follow the original proof by induction on dimensions of simplices of $K$. If $\tau_j\subset K$ is a simplex, since $((\psi_s^{\tau_j})_*g_{s,t}^{\tau_j}, \psi_s^{\tau_j}(Z^{\tau_j}))$ is $G$-invariant, it follows from Remark \ref{G-invariance of induced action} that $\nu_s^*(\psi_s^{\tau_j})_*g_{s,t}^{\tau_j}$ is $G$-invariant on $D^3(1.99)\cap \nu_s^{-1}(\psi_s^{\tau_j}(Z^{\tau_j})))$. We can then extend $\nu_s^*(\psi_s^{\tau_j})_*g_{s,t}^{\tau_j}$ to a metric $h_{s,t}^j$ on $D^3(1.99)$ that is compatible with the standard spherical structure and the equivariant properties, using Proposition \ref{prop:equiv-symm-met} in place of \cite[Prop~6.15]{bamler_ricci_2019} in the original proof.\\
     
        \paragraph{\textbf{Step 2}} Next, using  Proposition \ref{prop:equiv-round-met} in place of \cite[Prop~6.26]{bamler_ricci_2019} in the original proof, we see that the family of metrics $(h'_{s,t})_{s\in U, t\in [0,1]}$ on $D^3(1.99)$ constructed in  \cite[Claim~7.26]{bamler_ricci_2019} is $G$-invariant with respect to the group action induced by the embedding $\mu: D^3(2)\to Z^{\sigma_0}$. Moreover, the metric $k_{s,t}^{\sigma}$ in item (g) of  \cite[Prop~6.26]{bamler_ricci_2019} is $G$-invariant PSC-conformal on $\psi_s^{\sigma}(Z^{\sigma})$ thanks to Claim \ref{claim G-invariant mu} and Lemma \ref{lem equiv PSC conformal}.\\

  \paragraph{\textbf{Step 3}} Following the proof of \cite[Prop~6.29]{bamler_ricci_2019}, we fix a continuous family of diffeomorphisms  $(\Phi_u: D^3(1.99)\to D^3(1.99))_{u\in (0,1]}$ with the additional requirement that $\Phi_u(x) = f(x,u)x$ for some smooth function $f$ satisfying $f(h.x, u) = f(x, u)$ for all $h\in G$, $x\in D^3(1.99)$ and $u\in (0,1]$. Then we set $h''_{s,t} = \Phi^*_{1-(r_2)\delta_1(s)\delta_2(t)}h'_{s,t}$ exactly as in the original proof. We now modify the group action on $D^3(2)$ from the embedding $\mu: D^3(2)\to Z^{\sigma_0}$  to the embedding $\mu\circ\Phi_{1-(r_2)\delta_1(s)\delta_2(t)}: D^3(2)\to Z^{\sigma_0}$. For $\chi\in G$, the modified group action on $D^3(2)$ satisfies 
  \[\chi. y = (\Phi_{1-(r_2)\delta_1(s)\delta_2(t)})^{-1}(\chi. (\Phi_{1-(r_2)\delta_1(s)\delta_2(t)}(y))) \]
  for $y\in D^3(2)$, where $\chi. (\Phi_{1-(r_2)\delta_1(s)\delta_2(t)}(y))$ means the group action on $D^3(2)$ induced by the embedding $\mu: D^3(2)\to Z^{\sigma_0}$. Using this and the $G$-invariance of $h'_{s,t}$ from Step 2,
  we check that
  \begin{align*}
  \chi^*h''_{s,t} = (\Phi_{1-(r_2)\delta_1(s)\delta_2(t)}\circ\chi)^*h'_{s,t} &= (\chi\circ\Phi_{1-(r_2)\delta_1(s)\delta_2(t)})^*h'_{s,t}  = h''_{s,t}.
  \end{align*}
  Thus the metric is $G$-invariant with respect to the group action induced by the embedding $\mu\circ\Phi_{1-(r_2)\delta_1(s)\delta_2(t)}: D^3(2)\to Z^{\sigma_0}$. Without loss of generality, for each $s\in U\cap\sigma$, $t\in [0,1]$, we may then replace $\nu_s(D^3(1.99))$ with $\nu_s\circ\Phi_{1-(r_2)\delta_1(s)\delta_2(t)}(D^3(1.99))$ in item (g) of \cite[Prop~6.29]{bamler_ricci_2019}, subsequently the metric $(\nu_s)_*h''_{s,t}$ is $G$-invariant on $\nu_s(D^3(1.99))$ by  Remark \ref{G-invariance of induced action}. With Lemma \ref{lem equiv PSC conformal}, the metrics $\tilde{k}_{s,t}^{\sigma}$ on $\psi_s^{\sigma}(Z^{\sigma})$ constructed in item (g) of  \cite[Prop~6.29]{bamler_ricci_2019} is therefore $G$-invariant PSC-conformal.\\

        \paragraph{\textbf{Step 4}} Lastly, we define
        \begin{align*}
            \tilde{g}_{s,t}^{\sigma} = \begin{cases}
         g_{s,t}^{\sigma}\quad &\text{on}\ Z^{\sigma}\setminus(\psi_s^{\sigma})^{-1}(\nu_s(D^3(1.99))\ \text{if}\ s\in U\ \text{or on}\ Z^{\sigma}\ \text{if}\ s\notin U\\
         (\psi_s^{\sigma})^*(\nu_s)_*h''_{s,t}\quad &\text{on}\     (\psi_s^{\sigma})^{-1}(\nu_s(D^3(1.99))\ \text{if}\ s\in U
         \end{cases}
        \end{align*}
        for $s\in\sigma\subset K$, $t\in [0,1]$ as in the proof of  \cite[Lemma~7.22]{bamler_ricci_2019}. By the results in the previous steps, $(\tilde{\psi}_s^{\sigma})_*\tilde{g}_{s,t}^{\sigma}$ are thus $G$-invariant  metrics on $\tilde{\psi}_s^{\sigma}(\tilde{Z}^{\sigma})$. This finishes the proof.
    \end{proof}

\section{Equivariant contractibility}

\subsection{Proof of Theorem \ref{thm:2}}\

We first show that Theorem \ref{thm:2} can be reduced to the equivariant version of \cite[Lemma~8.5]{bamler_ricci_2019}. Such a reduction follows from the equivariant version of \cite[Lemma~8.4]{bamler_ricci_2019}, and the equivariant changes are summarized below:

\begin{claim}{\label{lem BK 19 8.4}} In \cite[Lemma 8.4]{bamler_ricci_2019},
    \begin{itemize}
    \item[(i)] The embedded disks $D'\subset D\subset U_{S_2}^s\cap\mathcal{M}_{t_2}^s$  in item (f) are $G$-invariant;
    \item[(ii)] In item (h), if $(Y, g_t^s)$ is a $G$-invariant 3-submanifold, then $(Y, g'^{,s}_t)$ is $G$-invariant PSC-conformal.
    \end{itemize}
\end{claim}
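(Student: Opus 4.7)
The plan is to trace through the construction in \cite[Lemma~8.4]{bamler_ricci_2019} and verify that, given the $G$-invariant $\mathcal{R}$-structure produced by Theorem~\ref{thm:equiv_round}, each geometric choice in the original proof can be made $G$-equivariantly. Since the spherical structure $\mathcal{S}^s$, the open sets $U_{S_2}^s$ and $U_{S_3}^s$, and the modified metric $g'^{,s}$ are already $G$-invariant, it remains to check that the disks in item~(f) and the conformal factor witnessing PSC-conformality in item~(h) inherit this equivariance.

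For part~(i), I plan to observe that the disks $D' \subset D \subset U_{S_2}^s \cap \mathcal{M}_{t_2}^s$ in Bamler--Kleiner's construction are saturated with respect to $\mathcal{S}^s$: they are unions of fibers of the spherical bundle over intervals of the one-dimensional base, and their bounding $2$-spheres are distinguished fibers selected by canonical conditions (typically either CMC representatives with respect to $g'^{,s}$ or level sets of the $G$-invariant scale function $\hat{\rho}_{g'^{,s}}$). Since $\mathcal{S}^s$ is $G$-invariant, $G$ permutes the fibers and preserves these canonical selection conditions, so the chosen bounding spheres form $G$-invariant families of fibers and the disks they bound are $G$-invariant subsets of $\mathcal{M}_{t_2}^s$. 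If a particular original choice should happen not to be equivariant, one can instead enlarge the parametrizing base intervals to their $G$-orbits before taking fiber-saturations; this stays inside $U_{S_2}^s$ provided $\delta$ is small.

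For part~(ii), the $G$-invariance of $g'^{,s}_t$ on $Y$ is immediate from Theorem~\ref{thm:equiv_round}, so the content of the claim is equivariant PSC-conformality, which I plan to extract via Lemma~\ref{lem equiv PSC conformal} in Appendix~\ref{app:1}. The ingredient needed is a $G$-invariant positive conformal factor making $g'^{,s}_t$ scalar-positive on $Y$. Since $g'^{,s}_t$ is $G$-invariant, the conformal Laplacian $L_{g'^{,s}_t}$ commutes with $G$, so on each connected component of $Y$ its first eigenspace is a one-dimensional $G$-representation. The positive first eigenfunction $\phi$ therefore transforms under $G$ by a continuous homomorphism $G\to\mathbb{R}_+$, which must be trivial by compactness of $G$; hence $\phi$ is $G$-invariant and furnishes the required conformal factor. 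For connected components of $Y$ that are permuted by $G$, I plan to fix a representative in each orbit and transport the eigenfunction via the $G$-action, obtaining an equivariantly coherent choice.

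The main obstacle is part~(i): Bamler--Kleiner's disks are constructed through a case analysis depending on the local geometry (cylindrical necks, cap-like regions, or almost-round components), and one must verify that in each case the canonical bounding sphere is preserved by $G$. Fortunately, each such sphere is determined intrinsically by the $\mathcal{R}$-structure, so equivariance propagates automatically from Theorem~\ref{thm:equiv_round}; the only care required is to ensure, case by case, that no auxiliary non-canonical choice (such as a base point or an orientation of the fibered interval) is introduced. Once this case-by-case check is carried out, part~(ii) follows routinely from the eigenfunction argument, completing the equivariant upgrade of \cite[Lemma~8.4]{bamler_ricci_2019}.
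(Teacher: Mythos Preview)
Your proposal is on the right track and, for part~(ii), even cites the same lemma the paper uses. But there are two points worth sharpening.

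For part~(ii), citing Lemma~\ref{lem equiv PSC conformal} is exactly what the paper does, and it is \emph{all} that is needed: once $(Y,g'^{,s}_t)$ is known (from \cite[Lemma~8.4(h)]{bamler_ricci_2019}) to be PSC-conformal, Lemma~\ref{lem equiv PSC conformal} upgrades this to $G$-invariant PSC-conformal by averaging the given conformal factor over $G$. Your subsequent eigenfunction argument stems from a misreading of that lemma---you do not need to \emph{produce} a $G$-invariant $w$ as input, because the lemma manufactures one from an arbitrary $w$. The eigenfunction route is also not well-adapted here, since the PSC-conformal condition in \cite[Lemma~6.4]{bamler_ricci_2019} involves specific boundary behavior (conditions (2) and (3) on $\partial M$), which a first-eigenfunction argument does not address without further work.

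For part~(i), your outline is correct in spirit---the disks are fiber-saturated for the $G$-invariant spherical structure $\mathcal{S}^s$, and the bounding spheres are chosen canonically---but the paper is more concrete than your ``CMC or level-set'' guess. Following the original proof of \cite[Lemma~8.4(f)]{bamler_ricci_2019}, the relevant region is $\delta'$-close to a pointed Bryant soliton, and for $\delta'$ small the scalar curvature of $g'^{,s}$ attains a \emph{unique} maximum at a point $x'$ in a small ball; this $\{x'\}$ is a singular spherical fiber, and $D'$ is the union of spherical fibers meeting the minimizing geodesic from $x$ to $x'$. Since the scalar-curvature maximum is unique and the spherical structure is $G$-invariant, $D'$ is $G$-invariant directly. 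Your fallback of enlarging base intervals to their $G$-orbits is never invoked.
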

\begin{proof}[Proof of Claim \ref{lem BK 19 8.4}]
    For assertion (ii), since the $\mathcal{R}$-structure is $G$-invariant, it follows from Lemma \ref{lem equiv PSC conformal} that $(Y, g'^{,s}_t)$ is $G$-invariant PSC-conformal. Next, to verify assertion (i), we follow closely with the original proof.  Given any $\delta' > 0$, we can find $C_0$ sufficiently large and $\delta$ sufficiently small such that $(\mathcal{M}_{t_2}^s, g_{t_2}^s, x)$ is $\delta'$-close to the pointed Bryant soliton $(M_{Bry}, g_{Bry}, x_{Bry})$ at scale $\rho(x)$. If $\delta'$ is small enough, then the scalar curvature of $g'^{,s}$ attains a unique maximum at some point $x'$ in a metric ball of radius $0.01\rho(x)$. Let $D'$ be the union of spherical fibers intersecting the minimizing geodesic from $x$ to $x'$. Moreover $x, x'\in U_{S_2}$ and $\{x'\}$ is a singular fiber. Since the spherical structure $\mathcal{S}^s$ is $G$-invariant, it follows that $D'$ is $G$-invariant.  
\end{proof}

Hence by using Claim  \ref{lem BK 19 8.4} together with \cite[Lemma 8.4]{bamler_ricci_2019}, it is sufficient to verify that the partial homotopy $\{(Z^{\sigma}, (g^{\sigma}_{s,t})_{s\in\sigma, t\in [0,1]}, (\psi_s^{\sigma})\}_{\sigma\subset K}$ constructed in \cite[Lemma 8.5]{bamler_ricci_2019} is $G$-invariant. For this purpose,  we need the following equivariant version of \cite[Lemma 8.6]{bamler_ricci_2019}: 

\begin{claim}{\label{lem BK 19 8.6}} After passing to a simplicial refinement of $\mathcal{K}'$ and modifying the $G$-invariant partial homotopy 
 $$\{(Z^{\sigma}, (g^{\sigma}_{s,t})_{s\in\sigma, t\in [0,1]}, (\psi_s^{\sigma})\}_{\sigma\subset K},$$ the additional data assumed in \cite[Lemma 8.6]{bamler_ricci_2019} further satisfy:
        \begin{itemize}
        \item The continuous family of embeddings $(\hat{\psi}_s^{\sigma}: \hat{Z}^{\sigma}\to \mathcal{M}_T^s)_{s\in\sigma}$ is  $G$-invariant;
        \item The continuous family of embeddings $(\nu_{s,j}^{\sigma}: D^3\to \mathcal{M}_T^s)$ is  $G$-invariant
        \end{itemize}
  for any $\sigma\in \mathcal{K}'$.

\end{claim}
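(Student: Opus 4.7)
The plan is to adapt the construction in \cite[Lemma~8.6]{bamler_ricci_2019} so that the auxiliary embeddings it produces are $G$-invariant, exploiting the $G$-invariance of the $\mathcal{R}$-structure from Theorem~\ref{thm:equiv_round}. Since the spherical structures $\mathcal{S}^s$, the subsets $U_{S_2}^s$, $U_{S_3}^s$, the metric $g'^{,s}$ and the time vector field $\partial'^{,s}_{\mathfrak{t}}$ are all $G$-invariant, any construction that is canonical with respect to $\mathcal{R}^s$ is automatically $G$-equivariant. My job is therefore to re-examine the noncanonical choices in the original proof and replace them by $G$-equivariant versions.

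First, for the enlarging embeddings $\hat{\psi}_s^\sigma:\hat{Z}^\sigma\to\mathcal{M}_T^s$: in \cite{bamler_ricci_2019} they are constructed by slightly enlarging $\psi_s^\sigma(Z^\sigma)$ inside $U_{S_2}^s\cup U_{S_3}^s$. I would replace the original enlargement by its $G$-saturation together with the saturation by spherical fibers of $\mathcal{S}^s$ and by components of $U_{S_3}^s\cap\mathcal{M}_t^s$. By $G$-invariance of $\mathcal{S}^s$, $U_{S_2}^s$, $U_{S_3}^s$, this set is a smooth $G$-invariant open submanifold with the same topological type as the original enlargement (up to a small isotopy, since the original was already a union of fibers up to error). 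After passing to a simplicial refinement so that the combinatorial type of the saturation (number of $G$-orbits of components, stabilizer types along each orbit) is locally constant in $s$, these saturated enlargements depend continuously on $s\in\sigma$ and can be parametrized by a single abstract model $\hat{Z}^\sigma$, giving the desired $G$-invariant $\hat{\psi}_s^\sigma$.

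Second, for the disk embeddings $\nu_{s,j}^\sigma:D^3\to\mathcal{M}_T^s$: each $\nu_{s,j}^\sigma$ is placed around a canonical reference point of the underlying geometry, namely a Bryant tip (the unique scalar-curvature maximum of the sort identified in Claim~\ref{lem BK 19 8.4}(i)) or a center of an almost round component of $U_{S_3}^s$. Because these reference points are intrinsic to $(g'^{,s},\mathcal{S}^s)$, the group $G$ permutes them, and after a simplicial refinement this permutation action is locally constant in $s$. I would partition the indices $\{1,\dots,m\}$ into $G$-orbits, choose a single representative $j_0$ per orbit, build $\nu_{s,j_0}^\sigma$ so that its image consists of spherical fibers of $\mathcal{S}^s$ centered at the corresponding reference point — this image is automatically invariant under the stabilizer of that point in $G$ — and then define $\nu_{s,j}^\sigma$ for the remaining indices in the same orbit by pushing forward with a chosen group element. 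The resulting family is $G$-invariant in the required sense.

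The main obstacle is arranging all of this continuously over the parameter space $\mathcal{K}'$: stabilizer types, $G$-orbit structures of reference points, and the combinatorial structure of $\hat{Z}^\sigma$ need not be constant in $s$, but they are upper-semicontinuous, so a sufficiently fine simplicial refinement makes them constant on each open simplex. Boundary compatibility between simplices is enforced by inducting on dimension of faces and appealing to Proposition~\ref{equivariant modification of enlarging a partial homotopy} and Proposition~\ref{equivariant modification of removing a disk from a partial homotopy} to match the choices already made on $\partial\sigma$. Once this continuous $G$-equivariant choice of $\hat{\psi}_s^\sigma$ and $\nu_{s,j}^\sigma$ has been fixed, the rest of the proof of \cite[Lemma~8.6]{bamler_ricci_2019} goes through verbatim with the equivariant modifications already established in Section~4, yielding the two additional $G$-invariance properties claimed.
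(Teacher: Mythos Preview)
Your strategy is workable but considerably more elaborate than what is needed, and it misses the main point of the paper's argument. You treat the enlargements and disk images in \cite[Lemma~8.6]{bamler_ricci_2019} as potentially noncanonical and propose to fix them by $G$-saturation, fiber saturation, and an orbit-partition scheme for the disks. The paper instead observes that no modification is required at all: the relevant sets in the proofs of \cite[Lemmas~8.7 and 8.8]{bamler_ricci_2019} are already cut out by $G$-invariant geometric conditions. Concretely, the enlarging set $Y$ in \cite[Lemma~8.7]{bamler_ricci_2019} is the locus where the curvature scale satisfies $\rho>2r_k$, and the disk regions $Y_j$ in \cite[Lemma~8.8]{bamler_ricci_2019} are unions of spherical fibers with $\rho\le 3\Lambda r_k$. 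Since $\rho$, the spherical structure, and $\psi^\sigma_{s_0}(Z^\sigma)$ are all $G$-invariant, these sets are automatically $G$-invariant, and the embeddings $\hat\psi^\sigma_s$ and $\nu^\sigma_{s,j}$ inherit $G$-invariance directly from the original Bamler--Kleiner construction.

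Your approach is not wrong in spirit, but it introduces genuine hazards that the paper's route avoids: you would need to justify that $G$-saturation preserves the topological type of the enlargement (your ``up to a small isotopy'' is an assertion, not an argument), and your orbit-partition scheme for the disks requires a continuity argument across simplices that is delicate. All of this is bypassed once you check, as the paper does, that the defining conditions in \cite[Lemmas~8.7, 8.8]{bamler_ricci_2019} are already canonical with respect to the $G$-invariant $\mathcal{R}$-structure.
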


\begin{proof}[Proof of Claim \ref{lem BK 19 8.6}]
    Following the construction in the original proof, it suffices to verify that the constructions in \cite[Lemma 8.7]{bamler_ricci_2019} and \cite[Lemma 8.8]{bamler_ricci_2019} are equivariant. Firstly, in the proof of \cite[Lemma 8.7]{bamler_ricci_2019} the set $Y\subset M_T^{s_0}\setminus\psi_{s_0}^{\sigma}(\text{Int}\ Z^{\sigma})$ consists of points satisfying  the curvature scale $\rho > 2r_k$. By $G$-invariance of the metrics $g_{s_0, t}^{\sigma}$, it follows that $Y$ is $G$-invariant. By the same reason, the embeddings $\hat{\psi}_s^{s_0, \sigma}(\hat{Z}^{\sigma})$ constructed in the original proof are also $G$-invariant. 

Next, in the proof of \cite[Lemma 8.8]{bamler_ricci_2019} the sets $Y_j\subset M_T^{s_0}$ are union of all open disks consisting of spherical fibers and satisfying the curvature scale $\rho\leq 3\Lambda r_k$. Hence by the $G$-invariance of spherical structure and the metrics, the closures $\bar{Y}_j$ are also $G$-invariant. Since the resulting embedd disks are chosen such that $\nu_{s_0, j}^{s_0, \sigma}(D^3) = \bar{Y}_j$, these embeddings are also $G$-invariant.
\end{proof}

We now modify the given $G$-invariant partial homotopy 
$$\{(Z^{\sigma}, (g^{\sigma}_{s,t})_{s\in\sigma, t\in [0,1]}, (\psi_s^{\sigma})\}_{\sigma\subset K}$$
at time $T$ relative to $L$ as in \cite[Subsection 8.4]{bamler_ricci_2019}. Using Proposition \ref{equivariant modification of enlarging a partial homotopy} and \ref{equivariant modification of removing a disk from a partial homotopy}, which are the equivariant versions of \cite[Proposition 7.9 and 7.21]{bamler_ricci_2019}, and the equivariant data $\hat{Z}^{\sigma}, \iota^{\sigma}, (\hat{\psi}_s^{\sigma})_{s\in\sigma}, (\nu_{s,j}^{\sigma})$ in Claim \ref{lem BK 19 8.6}, we may proceed the induction argument in \cite[page 103]{bamler_ricci_2019} to obtain a $G$-invariant partial homotopy
$$\{(\tilde{Z}^{\sigma}, (\tilde{g}^{\sigma}_{s,t})_{s\in\sigma, t\in [0,1]}, (\tilde{\psi}_s^{\sigma})\}_{\sigma\subset K}$$
at time $T$ relative to $L$ that satisfies the assertions in \cite[Lemma 8.15]{bamler_ricci_2019}. With this, we can complete the proof by constructing a partial homotopy at time $T - \Delta T$. We apply Proposition \ref{equivariant modification of moving a partial homotopy backwards in time} in place of \cite[Proposition 7.6]{bamler_ricci_2019} to the $G$-invariant partial homotopy $\{(\tilde{Z}^{\sigma}, (\tilde{g}^{\sigma}_{s,t})_{s\in\sigma, t\in [0,1]}, (\tilde{\psi}_s^{\sigma})\}_{\sigma\subset K}$. Therefore, the new partial homotopy $\{(\tilde{Z}^{\sigma}, (\bar{g}^{\sigma}_{s,t})_{s\in\sigma, t\in [0,1]}, (\bar{\psi}_s^{\sigma})\}_{\sigma\subset K}$ at time $T - \Delta T$ constructed in \cite[Subsection 8.4]{bamler_ricci_2019} is still $G$-invariant. \\

\subsection{Proof of Corollary \ref{equiv smale conj}}\

Inspired by the proof of \cite[Lemma 2.2]{bamler_ricci_2017}, we first prove the following fibration lemma for any closed subgroup $G$ of $SO(n + 1)$.

\begin{lemma}
    Let $\Met_{K\equiv 1}(S^n, G)\ (n \geq 2)$ denote the space of all $G$-invariant metrics on $S^n$ with constant sectional curvature $1$, and then $\Met_{K\equiv 1}(S^3, G)$ is homeomorphic to the orbit space $\Diff(S^n, G)/\Isom(S^n, G)$. Therefore, we have a natural fiber bundle,
    \[
        \Isom(S^n, G) \rightarrow \Diff(S^n, G) \xrightarrow{\pi} \Met_{K \equiv 1}(S^n, G)\,.
    \]
\end{lemma}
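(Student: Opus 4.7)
The plan is to realize $\pi: \Diff(S^n,G) \to \Met_{K\equiv 1}(S^n,G)$, $\phi \mapsto (\phi^{-1})^* g_{\mathrm{round}}$, as a principal $\Isom(S^n,G)$-bundle; both conclusions of the lemma then follow at once. Well-definedness is immediate: each $h \in G$ commutes with $\phi$ (since $\phi \in \Diff(S^n,G)$) and preserves $g_{\mathrm{round}}$ (since $G \subset O(n+1)$), so $h^*\bigl((\phi^{-1})^* g_{\mathrm{round}}\bigr) = (\phi^{-1} \circ h)^* g_{\mathrm{round}} = (h \circ \phi^{-1})^* g_{\mathrm{round}} = (\phi^{-1})^* g_{\mathrm{round}}$. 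The fiber of $\pi$ over $g_{\mathrm{round}}$ consists of those $\phi \in \Diff(S^n,G)$ that are isometries of $g_{\mathrm{round}}$, i.e.\ $\Isom(S^n,G)$, and the right $\Isom(S^n,G)$-action on $\Diff(S^n,G)$ clearly preserves fibers. Once local triviality is in hand, the homeomorphism $\Met_{K\equiv 1}(S^n,G) \cong \Diff(S^n,G)/\Isom(S^n,G)$ is automatic.

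The key step is surjectivity of $\pi$, which I will deduce from the equivariant spherical rigidity of de Rham \cite{de_rham_reidemeisters_1964}. Given $g \in \Met_{K\equiv 1}(S^n,G)$, classical rigidity of simply connected space forms (or the developing map, using $n \geq 2$) yields an isometry $F : (S^n, g_{\mathrm{round}}) \to (S^n, g)$. Conjugating the $G$-action on $(S^n,g)$ by $F$ produces another orthogonal representation $\tilde\rho: G \to O(n+1)$, smoothly conjugate to the original inclusion $\rho: G \hookrightarrow O(n+1)$ via $F$. The content of the spherical rigidity in this setting is that two smoothly conjugate orthogonal $G$-actions on the round sphere are in fact conjugate by an element of $O(n+1)$; precomposing $F$ with such a conjugating isometry produces the desired $G$-equivariant $\phi \in \Diff(S^n,G)$ with $\pi(\phi) = g$.

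For local triviality I follow the strategy of \cite[Lemma~2.2]{bamler_ricci_2017}. By transitivity of $\Diff(S^n,G)$ on the target, it suffices to build a continuous local section of $\pi$ near $g_{\mathrm{round}}$. For $g$ close to $g_{\mathrm{round}}$, one solves $\phi^* g_{\mathrm{round}} = g$ for $\phi$ close to the identity by an implicit function theorem in appropriate Sobolev norms, the linearization being surjective modulo the tangent space of $\Isom(S^n,G)$. The equivariance constraint $\phi \in \Diff(S^n,G)$ is arranged either by carrying out the construction equivariantly at every stage or, more concretely, by Haar-averaging a non-equivariant solution over the compact group $G$ (which is meaningful because both $g$ and $g_{\mathrm{round}}$ are $G$-invariant) and then invoking uniqueness of the solution to extract continuous dependence on $g$.

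The main obstacle is the equivariant rigidity feeding surjectivity: upgrading smooth conjugacy of two orthogonal $G$-actions on the round sphere to $O(n+1)$-conjugacy. This is the representation-theoretic consequence of the de Rham--Reidemeister theorem and reduces, after passing to the cone extension to $\RR^{n+1}$, to the standard fact that two isomorphic orthogonal representations of a compact Lie group are conjugate by an orthogonal transformation. The local-section construction is close to its non-equivariant prototype and requires no new analytic idea beyond averaging over the compact group $G$.
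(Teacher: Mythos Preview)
Your proposal is correct and follows essentially the same route as the paper: both reduce to proving transitivity of the $\Diff(S^n,G)$-action on $\Met_{K\equiv 1}(S^n,G)$ by first choosing a non-equivariant isometry $F$ (the paper's $\psi$) and then correcting it by an element of $O(n+1)$, the existence of such a correcting rotation being exactly the statement that the two orthogonal $G$-representations obtained by conjugating via $F$ are $O(n+1)$-conjugate, which the paper (like you) deduces from de Rham's spherical rigidity together with character theory of compact Lie groups. Your treatment of local triviality via an equivariant implicit-function/section argument is in fact more explicit than the paper's, which simply asserts that the fibration follows once transitivity and the stabilizer computation are in hand.
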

\begin{proof}

    It suffices to show that the action $\Diff(S^n, G) \acts \Met_{K\equiv 1}(S^n, G)$ by pushforward is transitive, with stabilizer $\Isom(S^n, G)$. Then the space $\Met_{K\equiv 1}(S^n, G)$ is homeomorphic to the orbit space $\Diff(S^n, G)/\Isom(S^n, G)$, and the fibration follows.

    Indeed, we only need to prove that for any metric $g \in \Met_{K\equiv 1}(S^n, G)$, there exists $\varphi \in \Diff(S^n, G)$, such that
    \[
        \varphi_\sharp(g_{\mathrm{round}}) = g\,,
    \]
    where $g_{\mathrm{round}}$ is a fixed round metric.

    By the existence of monodromy map, we know that there exists $\psi \in \Diff(S^n)$ such that
    \[
        \psi_\sharp(g_{\mathrm{round}}) = g\,. 
    \]
    However, a priori, $\psi$ is not necessarily $G$-invariant, and there is a large freedom to choose $\psi$. More precisely, for any $r \in \Isom(S^n, g_{\mathrm{round}})=SO(n+1)$, $\psi \comp r$ also satisfies the identity above. The transitivity follows from the next claim.

    \begin{claim}
        There exists a rotation $r \in SO(n+1)$, such that $\psi \comp r$ is $G$-invariant.
    \end{claim}
    \begin{proof}
        Note that $\psi \comp r$ is $G$-invariant if and only if for any $h \in G$, we have
        \[
            h \comp \psi \comp r = \psi \comp r \comp h\,,
        \]
        i.e.,
        \[
            r \comp h \comp r^{-1} = \psi^{-1} \comp h \comp \psi\,.
        \]

        We verify that for any $h \in G$, $\psi^{-1} \comp h \comp\psi$ is also a rotation of $S^n$, i.e., an isometry of $S^n$. Indeed, for any pair of points $p, q \in S^n$, we have
        \[
            \begin{aligned}
                d_{g_0}\left((\psi^{-1} \comp h \comp \psi)(p), (\psi \comp h \comp \psi^{-1})(q)\right) &= d_{g}\left((h\comp \psi)(p), (h \comp \psi)(q)\right)\\
                    &= d_{g}(\psi(p), \psi(q))\\
                    &= d_{g_0}(p, q)\,,
            \end{aligned}
        \]
        where the second identity follows from the $G$-invariance of $g$. Therefore, $\psi^{-1} \comp h \comp\psi$ is a rotation.

        Now, we can let $\rho_1, \rho_2: G \rightarrow GL_{n+1}(\mathbb{R})$ be two representations defined by
        \begin{align}
            \rho_1(h) v_1 &:= h(v_1)\,,\\
            \rho_2(h) v_2 &:= (\psi^{-1} \comp h \comp \psi)(v_2)\,,
        \end{align}
        for every $h \in G$, $v_1 \in S_1 = S^n$ and $v_2 \in S_2 = S^n$. Moreover, by definitions of $\rho_1$ and $\rho_2$, $S_1$ is $G$-diffeomorphic to $S_2$ via $\psi$.

        It follows from Theorem 4.3 in \cite{MR520507} that $S_1$ is $G$-isometric to $S_2$, so there exists a rotation $r \in SO(n+1)$, such that
        \[
            r \comp h \comp r^{-1} = \psi^{-1} \comp h \comp \psi\,.
        \]
        For completeness, we will sketch the proof here. 

        Let's first recall a well-known spherical rigidity theorem by G. de Rham \cite{de_rham_reidemeisters_1964}:
        
        \textit{If two rotations of $S^n$ are diffeomorphic, then they are isometric.}

        This implies that for any $h \in G$, there exists a rotation $r_h \in SO(n+1)$, such that 
        \[
            r_h \comp h \comp r^{-1}_h = \psi^{-1} \comp h \comp \psi\,.
        \]
        Even though the choice of $r_h$ possibly depends on $h$, it follows that $\rho_1$ and $\rho_2$ have the same character:
        \[
            \chi_{\rho_1}(h) = \chi_{\rho_2}(h)\quad \forall h \in G\,.
        \]

        By the representation theory of compact Lie group, the representations $S_1$ and $S_2$ are isomorphic.
    \end{proof}
\end{proof}

\begin{corollary}
    The inclusion map $\Isom(S^3, G) \hookrightarrow \Diff(S^3, G)$ is a homotopy equivalence if and only if $\Met_{\CC}(S^3, G)$ is weakly contractible.
\end{corollary}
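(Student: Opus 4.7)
The plan is to reduce the corollary to a routine long-exact-sequence argument after observing that $\Met_{\CC}(S^3, G)$ is homeomorphic to a product of $\Met_{K \equiv 1}(S^3, G)$ with a contractible factor. First I would note that, because $S^3$ is simply connected and compact, it cannot be realized as a quotient of the round cylinder $S^2 \times \RR$; hence every CC-metric on $S^3$ is a positive rescaling of a round metric. The scaling map
\[
    \RR_+ \times \Met_{K \equiv 1}(S^3, G) \longrightarrow \Met_{\CC}(S^3, G), \qquad (\lambda, g_0) \longmapsto \lambda\, g_0,
\]
is then a homeomorphism (an element of $\Met_{\CC}(S^3,G)$ of constant sectional curvature $c>0$ has unique preimage $(1/c, c\, g)$). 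Since $\RR_+$ is contractible, $\Met_{\CC}(S^3, G)$ is weakly contractible if and only if $\Met_{K \equiv 1}(S^3, G)$ is weakly contractible.

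Next I would feed the fiber bundle established in the preceding lemma into its long exact sequence of homotopy groups:
\[
    \cdots \to \pi_{k+1}(\Met_{K \equiv 1}(S^3, G)) \xrightarrow{\partial} \pi_k(\Isom(S^3, G)) \xrightarrow{i_*} \pi_k(\Diff(S^3, G)) \xrightarrow{\pi_*} \pi_k(\Met_{K \equiv 1}(S^3, G)) \to \cdots,
\]
interpreted as an exact sequence of pointed sets at $k=0$. If every $\pi_k(\Met_{K \equiv 1}(S^3, G))$ vanishes, then exactness immediately forces $i_*$ to be a bijection (respectively group isomorphism) at every $k \geq 0$. Conversely, if $i_*$ is an isomorphism at each degree, then exactness at $\pi_k(\Diff)$ makes $\pi_*$ the zero map and exactness at $\pi_{k-1}(\Isom)$ makes $\partial$ the zero map; combined with exactness at $\pi_k(\Met_{K \equiv 1})$ this yields $\pi_k(\Met_{K \equiv 1}(S^3, G)) = 0$ for all $k \geq 0$.

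Finally, since $\Diff(S^3, G)$, $\Isom(S^3, G)$, and $\Met_{\CC}(S^3, G)$ all have the homotopy type of ANRs (as recorded in the introduction), weak homotopy equivalence upgrades to genuine homotopy equivalence via Whitehead's theorem, and weak contractibility becomes contractibility. No step is a serious obstacle: the only mildly nontrivial point is the initial identification of CC-metrics on $S^3$ with rescaled round metrics, after which the argument is a standard application of the long exact sequence of a fibration.
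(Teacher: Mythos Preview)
Your argument is correct and follows essentially the same route as the paper: identify $\Met_{\CC}(S^3,G)$ with $\Met_{K\equiv 1}(S^3,G)\times(0,\infty)$, invoke the fibration $\Isom(S^3,G)\to\Diff(S^3,G)\to\Met_{K\equiv 1}(S^3,G)$ from the preceding lemma, and upgrade the resulting weak equivalence via Whitehead. The only imprecision is your attribution of the CW/ANR homotopy type of $\Diff(S^3,G)$ and $\Isom(S^3,G)$ to the introduction; the introduction only records this for the metric spaces $\Met(M,G)$ and $\Met_{\PSC}(M,G)$, whereas the paper's own proof of this corollary invokes \cite[Proposition~1.5]{curtis_automorphism_1975} to see that $\Diff(S^3,G)$ is a Fr\'echet submanifold of $\Diff(S^3)$ (hence of CW type) and observes separately that $\Isom(S^3,G)$ is a closed subgroup of $SO(4)$ and therefore a compact Lie group.
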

\begin{proof}
    It follows from the fibration in the previous lemma that $\Isom(S^3, G) \hookrightarrow \Diff(S^3, G)$ is weakly homotopy equivalent $\iff$ $\Met_{\CC}(S^3, G)$ is weakly contractible.

    On the other hand, \cite[Proposition 1.5]{curtis_automorphism_1975} implies that $\Diff(S^3, G)$ is a Fr\'echet submanifold of $\Diff(S^3)$, and thus has the homotopy type of CW complexes. Note also that $\Isom(S^3, G)$ is a closed subgroup of $SO(4)$, and thus a compact Lie group. Hence, we have $\Isom(S^3, G) \hookrightarrow \Diff(S^3, G)$ is is weakly homotopy equivalent $\iff$ the inclusion is a homotopy equivalence.
\end{proof}

It is easy to see that $\Met_{\CC}(S^3, G) = \Met_{K\equiv 1}(S^3, G) \times (0, \infty)$, and thus, $\Met_{K \equiv 1}(S^3, G)$ is homotopy equivalent to $\Met_{\CC}(S^3, G)$. Therefore, it suffices to prove that $\text{Met}_{CC}(M, G)$ is either empty or weakly contractible. For this purpose, we follow the proof of \cite[Theorem 1.2]{bamler_ricci_2019} and verify that it can be carried over to the equivariant setting. We first need the equivariant version of \cite[Lemma 9.2]{bamler_ricci_2019}:

\begin{lemma}\label{equiv BK lem 9.2}
    Let $(M, g)$ be a $G$-invariant manifold.
    \begin{itemize}
    \item[(a)]If $M = S^3/\Gamma$ and $g$ is a conformally flat metric, then there is a unique $G$-invariant $\phi\in C^{\infty}(M,G)$ depending smoothly on $g$ such that $(M, e^{2\phi}g)$ is isometric to the standard round sphere and $\int e^{-\phi} d\mu_g$ is minimal.  
    \item[(b)] If $M$ is diffeomorphic to a quotient of the round cylinder and $g$ is conformally flat, then there is a unique $G$-invariant $\phi\in C^{\infty}(M)$  depending smoothly on $g$ such that $e^{2\phi}g$ is isometric to a quotient of the standard round cylinder.
    \end{itemize}
\end{lemma}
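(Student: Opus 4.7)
The plan is to deduce the equivariant lemma directly from its non-equivariant counterpart \cite[Lemma~9.2]{bamler_ricci_2019}, using an ``invariance-by-uniqueness'' argument. The non-equivariant version already provides, for any conformally flat metric $g$ on $M$, a function $\phi = \phi(g) \in C^{\infty}(M)$ characterized by a canonical property (the minimization in (a), and some suitable normalization in (b)), and depending smoothly on $g$. All that remains is to verify that when $g$ is $G$-invariant, the function $\phi(g)$ is automatically $G$-invariant; smooth dependence on $g$ is then inherited from the non-equivariant version.

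For part (a), fix an arbitrary $h \in G$ and consider $h^{*}\phi$. Since $h:(M,g)\to(M,g)$ is an isometry, we have $h^{*}(e^{2\phi}g) = e^{2h^{*}\phi}\, g$, so $(M, e^{2h^{*}\phi}g)$ is isometric (via $h$) to $(M, e^{2\phi}g)$ and hence to the standard round sphere. Moreover, the isometry property gives
\[
    \int_{M} e^{-h^{*}\phi}\, d\mu_{g} \;=\; \int_{M} e^{-\phi}\, d\mu_{h_{*}g} \;=\; \int_{M} e^{-\phi}\, d\mu_{g},
\]
so $h^{*}\phi$ is also a minimizer. The uniqueness statement of \cite[Lemma~9.2(a)]{bamler_ricci_2019} forces $h^{*}\phi = \phi$. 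Since $h \in G$ was arbitrary, $\phi$ is $G$-invariant.

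For part (b), the argument is identical in structure: $h^{*}\phi$ again satisfies $e^{2h^{*}\phi}g = h^{*}(e^{2\phi}g)$, which is isometric to a quotient of the standard round cylinder whenever $e^{2\phi}g$ is. Applying the uniqueness clause of \cite[Lemma~9.2(b)]{bamler_ricci_2019} yields $h^{*}\phi = \phi$ for every $h \in G$. Smooth dependence on $g$ in the $G$-invariant category then follows because the map $g \mapsto \phi(g)$ is smooth on the ambient space of conformally flat metrics, and restricting a smooth map to the closed submanifold of $G$-invariant metrics preserves smoothness.

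The only potential obstacle is making sure that the minimizer/normalized representative singled out in Bamler--Kleiner is genuinely unique (not merely unique up to the action of some residual finite-dimensional symmetry group), so that the equality $h^{*}\phi = \phi$ really follows. In (a), this is exactly what the integral minimization condition achieves: it breaks the residual conformal symmetry coming from the Möbius group acting on $S^{3}/\Gamma$. In (b), the analogous normalization in the original lemma plays the same role for the residual translations/dilations of the cylinder. Once these uniqueness statements are invoked verbatim, the equivariant conclusion is essentially immediate.
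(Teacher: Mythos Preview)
Your proposal is correct and follows essentially the same ``invariance-by-uniqueness'' argument as the paper: both show that for any $h\in G$ the pulled-back function $h^*\phi$ again satisfies the characterizing properties of $\phi$, and then invoke the uniqueness clause of \cite[Lemma~9.2]{bamler_ricci_2019} to conclude $h^*\phi=\phi$. If anything, your version is slightly more explicit, since you actually verify that $\int e^{-h^*\phi}\,d\mu_g = \int e^{-\phi}\,d\mu_g$ (so that $h^*\phi$ is again a minimizer), whereas the paper leaves this step implicit.
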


\begin{proof}[Proof of Lemma \ref{equiv BK lem 9.2}]
    It suffices to verify that the constructions in the proof of \cite[Lemma 9.2]{bamler_ricci_2019} are $G$-invariant. For assertion (a), suppose that the function $\phi$ constructed in \cite[Lemma 9.2(a)]{bamler_ricci_2019} is not $G$-invariant. Let us fix some $h\in G$. By the fact that $g$ is a $G$-invariant metric and that $(M, e^{2\phi}g)$ is isometric to the standard round sphere, with the metric 
    $g_h = e^{2(\phi\circ h)}h^*g =  e^{2(\phi\circ h)}g$  the Riemannian manifold $(M, g_h)$ is also isometric to the standard round sphere. This contradicts to the uniqueness of $\phi$. Assertion (b) holds for the same reason.
\end{proof}

Now, if $\text{Met}_{CC}(M, G)\neq\emptyset$ we can follow the proof of \cite[Theorem 1.2]{bamler_ricci_2019} to construct a homotopy of pairs $(\hat{\alpha}_t: (D^{k+1}, S^k)\to (\text{Met}(M, G), \text{Met}_{CC}(M, G)))_{t\in [0,1]}$ with Theorem \ref{thm:2} in place of \cite[Theorem 1.6]{bamler_ricci_2019}, where $\hat{\alpha}_1$ takes values in the space of conformally flat metrics on $M$. Finally using Lemma \ref{equiv BK lem 9.2} in place of \cite[Lemma 9.2]{bamler_ricci_2019} in the  proof of \cite[Theorem 1.2]{bamler_ricci_2019}, we obtain a family of metrics that defines a null-homotopy in $\text{Met}_{CC}(M, G)$. It follows that $\text{Met}_{CC}(M, G)$ is weakly contractible. 

\section{Topology of equivariant manifolds with positive scalar curvature}

For simplicity, we assume that $G$ is a connected compact Lie group, and thus, the quotient $G/H$ is connected for any closed subgroup $H$. For a closed three manifold $M$, since $G$ is a subgroup of $\Diff(M)$, $G$ acts on $M$ effectively and diffeomorphically. By principal orbit type theorem, there exists a unique principal orbit type $G/H$ of $M$ (we follow the notion in \cite{bredonIntroductionCompactTransformation1972}), which is a homogenous space such that $G$ acts on $G/H$ transitively.

Note that $M$ admits a metric with positive scalar curvature, $M$ is necessarily a connected sum of spherical space forms and copies of $S^2 \times S^1$.\\

\paragraph{\textbf{Case 1}} $\dim(G/H) = 0$.

By connectedness, $G/H$ is a point and so is $G$. Hence, $M$ can be any connected sum of spherical space forms and copies of $S^2 \times S^1$.\\

\paragraph{\textbf{Case 2}} $\dim(G/H) = 1$.

    This is equivalent to the case where $G/H$ is $S^1$ and $G$ is $SO(2)$ acting effectively on $M$.

    In \cite{orlikActions3Manifolds1968}, P. Orlik and F. Raymond classified all $M$ with $SO(2)$ actions.
    \begin{enumerate}
        \item When $SO(2)$ has no fixed points, $M$ is either $S^3$, $S^2 \times S^1$, $N$ (non-orientable handle), $RP^2 \times S^1$, $L(p, q)$ (lens space), $K \times S^1$ (a trivial Klein bottle bundle over $S^1$), $KS$ (a non-trivial Klein bottle bundle over $S^1$), a quotient of $SO(3)$ or $Sp(1)$ by a finite, non-abelian, discrete subgroup, or a $K(\pi, 1)$ whose fundamental group has infinite cyclic center (provided it is not $T^3$).
        \item When $SO(2)$ has fixed points, $M$ is a connected sum of copies of $S^3$, $S^2\times S^1$, $N$, $RP^2 \times S^1$, $L(p, q)$.
    \end{enumerate}

    Since $M$ is orientable and admits positive scalar curvature, we can rule out some possibilities above.

    In the case without fixed points, since $M$ is orientable, $M$ could not be $N$, $RP^2 \times S^1$, $K\times S^1$ or $KS$. In addition, $M$ admitting positive scalar curvature could not be an aspherical space. In conclusion, $M$ is either a spherical space form or $S^2 \times S^1$.

    In the case with fixed points, the orientability implies that $M$ is a connected sum of $S^3$, copies of $L(p,q)$ and $S^2 \times S^1$.\\

\paragraph{\textbf{Case 3}} $\dim(G/H) = 2$.

    The classification of all topological actions of compact Lie groups on connected $(n+1)$-dimensional manifolds with $n$-dimensional orbits is originated from P. S. Mostert's work \cite{mostertCompactLieGroup1957} in 1957. A thorough list for the compact case with $n = 2$ can be found in W. D. Neumann's work \cite{neumann3DimensionalGManifolds2Dimensional1968} in 1968.

    Here we summarize two essential facts used in this case.
    \begin{enumerate}
        \item Since $G/H$ has dimension $2$, the orbit space $M^*=M/G$ has dimension $1$. It follows from \cite[Chapter IV, Theorem 8.1]{bredonIntroductionCompactTransformation1972} that $M^*$ is a $1$-manifold, i.e., either $S^1$ or $I = [0, 1]$;
        \item Since $G$ acts on $G/H$ effectively and transitively, either $G = T^2$ and $H = \set{\mathrm{Id}}$, or $G = SO(3)$ and $H = O(2)$ or $SO(2)$.
    \end{enumerate}
    
    These would lead to the general classification \cite[Table in p.221]{neumann3DimensionalGManifolds2Dimensional1968} that $M$ is $T^3$, $K \times S^1$, $KS$, $RP^2 \times S^1$, $N$, $S^2 \times S^1$, $L(p, q)$, or $RP^3 \sharp RP^3$.

    Because we assume that $M$ is orientable and admits positive scalar curvature, $M$ is either $S^2 \times S^1$, a lens space, or $P^3 \sharp P^3$. It also follows from the table \textit{loc. cit.} that $G$ is either $T^2$ or $SO(3)$ ($G \neq T^2$ when $M = RP^3 \sharp RP^3$).\\

\paragraph{\textbf{Case 4}} $\dim(G/H) = 3$.

    In this case, $M = G/H$ is a homogeneous space, and $G$ acts on $M$ effectively and transitively. To verify that $M$ is either a spherical space form or $S^2 \times S^1$, we choose a $G$-invariant metric with positive scalar curvature on $M$ and run Ricci flow from it. 

    The transitivity of $G$-action on $M$ implies that whenever there exists a singular point in $M$ at time $T$, i.e., $\lim_{t \rightarrow T}R_t(x) = \infty$, every point in $M$ has scalar curvature tending to $\infty$. In other words, $T$ is exactly the extinct time. Moreover, the blowup model at each point in $M$ at $T$ is a $\kappa$-solution. 

    The classification theorem of $\kappa$-solutions \cite[Theorem 3.2]{bamler_ricci_2019} implies that they should be either a round shrinking cylinder, its $\mathbb{Z}_2$-quotient, an isometric quotient of the round shrinking sphere, the Bryant soliton or rotationally symmetric $S^3$, or $RP^3$. The transitivity of $G$-action again implies that they could not be the Bryant soliton as it has a unique center. Hence, one can summarize from the orientability that $M$ is either a spherical space form or $S^2 \times S^1$.
    
    In the spherical space form case, it follows from \cite[Chap 2: Corollary 2.7.2]{wolf_spaces_2011} that $M$ is either $S^3$, $RP^3$, $L(m, 1)\ (m > 2)$, $S^3/D^*_m\ (m > 2)$, $S^3/T^*$ or $S^3/O^*$.

    These complete the proof of Theorem \ref{thm:3}.

\appendix
\section{Equivariant version of results in \cite[Sect. 6]{bamler_ricci_2019}}
    Here, we shall collect some necessary modifications on the results from \cite[Sect. 6]{bamler_ricci_2019}.
    
    \subsection{Equivariant PSC-conformal metrics}\label{app:1}
        In this subsection, let $(M, g)$ be a $3$-manifold with boundary, and suppose that there is a compact Lie group $G$ such that $(M, g)$ is $G$-invariant. We say that $(M, g)$ is \textbf{$G$-invariant PSC-conformal} if it is PSC-conformal and the corresponding conformal factor $w$ is $G$-invariant. 
        \begin{lemma}\label{lem equiv PSC conformal}
             A $G$-invariant manifold $(M, g)$ is PSC-conformal if and only if it is $G$-invariant PSC-conformal.
        \end{lemma}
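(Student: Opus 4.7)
The reverse implication is immediate from the definitions. For the forward direction, the natural plan is to start from a PSC-conformal factor $w > 0$ realizing the PSC-conformality of $(M, g)$ and symmetrize it over $G$ using the normalized Haar measure $\mu_G$. Concretely, I would set
\begin{equation*}
\tilde w(x) := \int_G w(h \cdot x) \, d\mu_G(h),
\end{equation*}
which is smooth, strictly positive, and $G$-invariant thanks to the left-invariance of $\mu_G$ and the positivity of $w$.

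The next step is to verify that $\tilde w$ itself realizes PSC-conformality. The key structural input is that, in dimension $3$, the PSC-conformal condition is a \emph{linear} strict inequality on $w$ of the form $L_g w := -8\Delta_g w + R_g w > 0$ in the interior, together with a linear boundary inequality encoding mean-convexity (or minimality) of $\partial M$ after the conformal change. Because $g$ is $G$-invariant, every $h \in G$ acts isometrically on $(M,g)$, so the conformal Laplacian commutes with pullback by $h$, i.e.\ $L_g(w \circ h) = (L_g w) \circ h$. Integrating against $\mu_G$ and using linearity of $L_g$, one obtains
\begin{equation*}
L_g \tilde w = \int_G (L_g w) \circ h \, d\mu_G(h) > 0,
\end{equation*}
and the same averaging handles the boundary inequality. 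Hence $\tilde w$ is a $G$-invariant PSC-conformal factor, establishing the $G$-invariant PSC-conformality of $(M, g)$.

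The main potential obstacle is simply matching this scheme to the precise definition of PSC-conformality used in \cite{bamler_ricci_2019}: one must check that every component of that definition (interior inequality, boundary condition, any auxiliary normalization) is genuinely linear in the conformal factor, so that Haar-averaging preserves it. If it is, as expected, the argument above is essentially complete. Otherwise, the fallback is a convexity argument: the set of admissible conformal factors on the fixed $G$-invariant $(M,g)$ forms a convex cone that is preserved by pullback along isometries of $g$, and hence is stable under Haar-averaging, which again yields the desired $G$-invariant factor $\tilde w$.
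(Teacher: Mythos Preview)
Your overall strategy---Haar-averaging the conformal factor---is exactly what the paper does, and your treatment of the interior condition (1) is correct: the conformal Laplacian commutes with the isometric $G$-action, so the interior strict inequality survives averaging by linearity.

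The gap is in your handling of the boundary. The PSC-conformal definition in \cite[Lemma~6.4]{bamler_ricci_2019} is not a single linear boundary inequality. It consists of a condition (2) that pins down the value of $w$ on $\partial M$ (in a $G$-invariant way), together with a condition (3) of the form
\[
(\nu_{\partial M}\, w^4)\, g|_{\partial M} \;=\; A_{\partial M},
\]
which is a \emph{nonlinear equality} in $w$. Neither your linearity argument nor your convex-cone fallback applies to such a constraint: the set of $w$ satisfying a nonlinear equality is not a cone, and averaging does not preserve it in general.

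The missing idea, which the paper supplies, is that condition (2) already forces $h^*(w) = w$ on $\partial M$ for every $h \in G$, hence $\tilde w = w$ on $\partial M$. This lets one factor $\nu_{\partial M}\tilde w^4 = 4\tilde w^3\,\nu_{\partial M}\tilde w = 4w^3\,\nu_{\partial M}\tilde w$ and then pull the normal derivative inside the Haar integral, reducing (3) to a genuinely linear computation. Once you insert this observation, your argument becomes complete and coincides with the paper's proof.
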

        \begin{proof}
            It is well-known that one can construct a unit-volume biinvariant Riemannian metric on the compact Lie group $G$. Suppose that $(M, g)$ is PSC-conformal with the corresponding function $w$. let's take the average action of $G$ over $w$ to obtain 
            \begin{equation}
                \tilde{w} = \int_{h \in G} h^*(w) \mathrm{d}h\,,
            \end{equation}
            and it suffices to show that the $G$-invariant function $\tilde w$ satisfies (1)-(3) of \cite[Lemma~6.4]{bamler_ricci_2019}.
            
            For (1), this obviously follows from the $G$-invariance of $(M, g)$, since $\Delta$ commutes with each $h^*$ and $\tilde w$ is a linear combination of $h^*(w)$.
            
            For (2), this also follows from the $G$-invariance of $(M, g)$.
            
            For (3), by (2) and $G$-invariance, $h^*(w) = w$ on $\partial M$ for any $h \in G$. Therefore, we have
            \begin{equation}
                \begin{aligned}
                    (\nu_{\partial M} \tilde w^4)g|_{\partial M} &= \nu_{\partial M}  \left(\int_{h \in G} h^*(w)\right)^4g|_{\partial M}\\
                    &= 4 \left(\int_{h\in G} h^*(w)\right)^3 \nu_{\partial M} \left(\int_{h\in G} h^*(w)\right)g|_{\partial M}\\
                    &= 4 w^3 \nu_{\partial M} \left(\int_{h\in G} h^*(w)\right)g|_{\partial M}\\
                    &= \int_{h\in G} \left(4(h^*(w))^3 \nu_{\partial M} (h^*(w))\right)g|_{\partial M}\\
                    &= \int_{h\in G}\left(h_*(\nu_{\partial M}) (h^*(w))^4 \right)g|_{\partial M}\\
                    &= \int_{h\in G} h^*\left(\nu_{\partial M}(w)^4 \right) g|_{\partial M}\\
                    &= \int_{h\in G} h^*\left(\nu_{\partial M}(w)^4 g|_{\partial M}\right)\\
                    &= \int_{h\in G} h^*\left(A_{\partial M}\right)\\
                    &= A_{\partial M}\,,
                \end{aligned}
            \end{equation}
            where we use the facts that $h_*(\nu_{\partial M}) = \nu_{\partial M}$ and $h^*(A_{\partial M}) = A_{\partial M}$.
        \end{proof}

    \subsection{Extending equivariant symmetric metrics}
        \begin{proposition}\label{prop:equiv-symm-met}
            In \cite[Proposition~6.15]{bamler_ricci_2019}, $Y$ needs not be connected, and instead, suppose additionally that $M$, $Y$, $(\mathcal{S}^s)_{s \in D^n}$, $(g^1_s)$, $(g^2_{s, t})$ and $(g^3_{s,t})$ are all $G$-invariant for some finite group $G$. Then $(h_{s, t})$ are $G$-invariant.
        \end{proposition}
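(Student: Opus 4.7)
The plan is to follow the construction of $(h_{s,t})$ from the proof of \cite[Proposition~6.15]{bamler_ricci_2019} and verify that, under the $G$-invariance hypotheses, every ingredient can be chosen $G$-equivariantly, so that the resulting family of metrics is automatically $G$-invariant. The original construction patches together $g^1_s$, $g^2_{s,t}$ and $g^3_{s,t}$ via cutoff functions built out of canonical quantities — distances to $\partial Y$, curvature scales, projections onto fibers of $\mathcal{S}^s$, and so on — together with local averages that preserve compatibility with $\mathcal{S}^s$. Each of these canonical quantities is $G$-invariant when built from $G$-invariant inputs, so the corresponding interpolations produce $G$-invariant metrics at every stage. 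This mirrors the verification strategy already employed throughout Theorem~\ref{thm:equiv_round}, where one inspects each step and observes that the recipe depends only on canonically defined data.

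To accommodate the fact that $Y$ need not be connected, I note that $G$ permutes the connected components of $Y$ (since $Y$ itself is $G$-invariant) and that the original construction of $h_{s,t}$ takes place in a tubular neighborhood of $\partial Y$. Performing the construction simultaneously on all components, with $G$-invariant tubular neighborhoods and cutoff functions depending on the $G$-invariant distance-to-$Y$ function, yields an $h_{s,t}$ that is $G$-invariant without any need for group averaging. Equivalently, one may perform the construction on a single representative component in each $G$-orbit and propagate it via the $G$-action; since the input data $(g^1_s,g^2_{s,t},g^3_{s,t})$ agree under pullback by elements of $G$ that identify components, the two procedures produce the same result.

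The main point to watch is that the $G$-equivariant cutoffs and local averages preserve compatibility with $\mathcal{S}^s$. This holds because $\mathcal{S}^s$ is $G$-invariant: the action of $h\in G$ permutes fibers, sending regular to regular and singular to singular, and carries local $O(3)$-symmetries compatible with $\mathcal{S}^s$ to other such symmetries. Hence a metric is compatible with $\mathcal{S}^s$ if and only if each of its pullbacks by elements of $G$ is, and the fiberwise convex combinations and symmetrizations used in the original proof preserve this class when performed with $G$-invariant cutoffs. The matching conditions on the subsets where $h_{s,t}$ is prescribed to equal some $g^i$ are similarly respected, because those subsets are $G$-invariant and the $g^i$ are themselves $G$-invariant. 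The one mild obstacle, were one to attempt a naive $G$-averaging shortcut, would be that averaging several compatible metrics need not yield a compatible metric (round fibers need not be preserved by a sum of pullbacks using different local $O(3)$-frames); avoiding this is precisely the reason the proof follows the original construction step by step rather than post-processing by symmetrization.
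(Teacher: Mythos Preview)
Your proposal is correct and follows the same overall strategy as the paper: trace through Bamler--Kleiner's construction and note that every step is built from canonical, $G$-invariant data, so no post-hoc averaging is needed (and you rightly point out that naive averaging would break compatibility with $\mathcal{S}^s$).

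The paper's proof differs in emphasis rather than content. Instead of describing the construction in terms of generic ``cutoff functions built out of canonical quantities,'' it follows Bamler--Kleiner's own reduction structure explicitly: first it observes that the open set $V$ from \cite[Lemma~6.2]{bamler_ricci_2019} can be taken $G$-invariant (allowing one to drop $g^3_{s,t}$ and identify all $\mathcal{S}^s$ with a fixed $\mathcal{S}$); then it case-splits on whether $\partial Y=\emptyset$; and in the nontrivial case it invokes the topological classification \cite[Lemma~6.1]{bamler_ricci_2019} of $Y$ as a disjoint union of $S^2\times[0,1]$, $(S^2\times[-1,1])/\mathbb{Z}_2$, and $D^3$, removes $G$-invariant collar neighborhoods of $\partial Y$, and reduces everything to an equivariant analogue of the cylinder lemma \cite[Lemma~6.16]{bamler_ricci_2019}. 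The payoff of the paper's route is that the only nontrivial verification lives on the standard model $(S^2\times[-2,2],\,S^2\times[-1,1])$, where the stabilizer of each component fixes the spherical fibers, making the appeal to \cite[Lemma~6.17]{bamler_ricci_2019} immediate. Your more abstract sketch reaches the same conclusion but does not isolate this reduction, so a reader cannot as easily check the one place (the cylinder interpolation) where something might go wrong.
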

        \begin{proof}
            It follows closely from the original proof.

            Under the additional equivariant conditions, one can verify that the open subset $V$ constructed in \cite[Lemma~6.2]{bamler_ricci_2019} can be taken $G$-invariant. As in the original proof, we can replace $M$ by $V$, identify $\mathcal{S}^s$ with $\mathcal{S}$ and remove the family $(g^3_{s,t})$ together with Assumptions (ii), (iii) and Assertion (c).

            \begin{description}
                \item[Case 1. $\partial Y = \emptyset$] In this case, $M \equiv Y$, so we can simply put $h_{s,t} = g^1_s$.
                \item[Case 2. $\partial Y \neq \emptyset$]

                    By \cite[Lemma~6.1]{bamler_ricci_2019}, $Y$ is diffeomorphic to either a disjoint union of multiple $S^2 \times [0, 1], (S^2\times [-1, 1])/\mathbb{Z}_2$ and $D^3$. Since $G$-actions only map a component of $\partial Y$ to another, we can remove collar neighborhoods of $\partial Y$ from $Y$ equivariantly, the remaining $G$-invariant region that is a union of fibers denoted by $Z$.

                    Following the arguments in the original proof, it suffices to prove the following equivariant lemma.

                    \begin{lemma}
                        In \cite[Lemma~6.16]{bamler_ricci_2019}, let $(M, Y)$ be a disjoint union of $(S^2 \times [-2, 2], S^2 \times [-1, 1])$ with $G$-invariance. If we assume additionally that $\mathcal{S}$, $(g^1_s)$ and $(g^2_{s, t})$ are $G$-invariant, then the generated metrics $(h^1_{s,t})$ are $G$-invariant as well.
                    \end{lemma}
                    \begin{proof}
                        Note that each connected component $Y'$ of $\overline{Y \backslash Z}$ has $\operatorname{Stab}_G(Z)$ fixing fibers of spherical structures, so the lemma above follows directly from \cite[Lemma~6.17]{bamler_ricci_2019} and an elementary $G$-invariant extension.
                    \end{proof}
            \end{description}

            In both cases, the generated metrics $(h_{s,t})$ are $G$-invariant.
        \end{proof}

    \subsection{Extending equivariant symmetric metrics on the round sphere}
        \begin{proposition}
            In \cite[Proposition~6.18]{bamler_ricci_2019}, if $(M, g^*)$, $(k_{s,t})$ and $(\mathcal{S}^s)_{s \in A}$ are $G$-invariant, then so are the generated metrics $(h_{s,t})$.
        \end{proposition}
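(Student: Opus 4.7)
The plan is to follow the construction in \cite[Proposition~6.18]{bamler_ricci_2019} step by step, and verify at each stage that the $G$-invariance of the input data (namely $(M,g^*)$, the family $(k_{s,t})$, and the spherical structures $(\mathcal{S}^s)_{s\in A}$) forces the $G$-invariance of every auxiliary object produced in the proof. The strategy mirrors the one used in Proposition~\ref{prop:equiv-symm-met} above: we exploit the fact that the constructions in the original proof are either canonical (uniqueness forces equivariance) or can be replaced with a canonical variant (by the averaging lemma from Lemma~\ref{lem equiv PSC conformal} together with the biinvariant Haar measure on $G$).

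First, I would localize: since $M$ is the round $S^2\times[-1,1]$ or a round sphere in the setup of \cite[Prop.~6.18]{bamler_ricci_2019}, and the extension is built near the boundary of the region where $k_{s,t}$ is already defined, I would identify the collar neighborhood of the relevant boundary component, pass to a $G$-invariant subcollar, and then carry out the construction there. The spherical structure $\mathcal{S}^s$ being $G$-invariant means that $G$ permutes the $S^2$-fibers and acts on each fiber by isometries, so the base parameter and the fiber radius are both $G$-invariant functions on the collar. This is the key structural fact that ensures every geometric quantity extracted from $\mathcal{S}^s$ (center of mass of a fiber, CMC-sphere, radial foliation, etc.) is automatically $G$-equivariant.

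Next, I would interpolate between $k_{s,t}$ and $g^*$. The interpolation in the original proof is done using cutoff functions that depend only on the fiber parameter of $\mathcal{S}^s$ and on pointwise metric quantities of $k_{s,t}$ and $g^*$; since both inputs and the structure are $G$-invariant, these cutoffs are $G$-invariant, hence so is the interpolated metric. For the step where \cite[Prop.~6.18]{bamler_ricci_2019} invokes Proposition~\ref{prop:equiv-symm-met} (formerly \cite[Prop.~6.15]{bamler_ricci_2019}) to glue the family onto the full sphere, I would invoke the equivariant version, Proposition~\ref{prop:equiv-symm-met}, instead; this immediately produces a $G$-invariant output. Finally, if at any stage a choice is not canonical (for instance a choice of metric realizing a minimal distortion), I would replace it with its $G$-average using biinvariant Haar measure, exactly as in Lemma~\ref{lem equiv PSC conformal}; the averaging is legitimate because the space of admissible extensions is convex, and the minimizer/interpolant enjoys a uniqueness property that forces the average to coincide with the chosen object.

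The main obstacle I anticipate is checking the gluing step where the extension must match $k_{s,t}$ to all orders along a specific $S^2$-fiber: one must ensure that the $G$-average of an admissible extension does not destroy the matching conditions. This is handled by observing that the matching conditions are $G$-invariant (since the boundary fiber is $G$-invariant and $k_{s,t}$ is $G$-invariant), so the fiber of the space of admissible extensions is a $G$-invariant affine subspace and averaging stays inside it. Once this is verified, all remaining steps are routine verifications of $G$-invariance from $G$-invariance of inputs, and the conclusion $(h_{s,t})$ is $G$-invariant follows.
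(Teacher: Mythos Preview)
Your proposal is workable in spirit but substantially more laborious than what is actually needed, and it slightly misidentifies the setup. In \cite[Proposition~6.18]{bamler_ricci_2019}, $M$ is a closed spherical space form, not $S^2\times[-1,1]$; the cylinder only enters inside the auxiliary lemma used in the proof of \cite[Proposition~6.15]{bamler_ricci_2019}. More importantly, the original proof of \cite[Proposition~6.18]{bamler_ricci_2019} does not proceed by the kind of collar-localization and boundary-matching you describe: it reduces immediately to two cases, $E=\emptyset$ and $A=\Delta^n$. In the first case $h_{s,t}=\lambda_0^2(s,t)\,g^*$ is a scalar multiple of the $G$-invariant round metric $g^*$, hence trivially $G$-invariant. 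In the second case the construction is literally an application of \cite[Proposition~6.15]{bamler_ricci_2019}, so one simply invokes its equivariant version, Proposition~\ref{prop:equiv-symm-met}, and is done.

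So the paper's argument is a two-line case split, whereas you propose to re-run the entire extension procedure with equivariance checked at each stage and Haar-averaging as a safety net. Your route would succeed (the averaging argument for the affine space of admissible extensions is sound), but it buys nothing here: there are no non-canonical choices to average over, and no delicate boundary matching to worry about, once you recognize the case split. The only genuine content is that you correctly identify Proposition~\ref{prop:equiv-symm-met} as the key input; everything else in your outline is unnecessary scaffolding.
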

        \begin{proof}
            Similarly, we only need to check the two cases $E = \emptyset$ or $A = \Delta^n$.
            \begin{description}
                \item[Case 1. $E = \emptyset$] In this case, $h_{s, t} = \lambda^2_0(s, t) g^*$ is definitely $G$-invariant.
                \item[Case 2. $A = \Delta^n$] The construction of $(h_{s,t})$ is a direct consequence of out previous proposition, Proposition \ref{prop:equiv-symm-met}.
            \end{description}
        \end{proof}
        
    \subsection{The equivariant rounding metrics}
        Suppose that $G$ is a compact Lie group acting on $B^3$ preserving the standard spherical structures, we have the following equivariant result.

        \begin{proposition}\label{prop:equiv-round-met}
            In \cite[Proposition~6.26]{bamler_ricci_2019}, if $(h_s)_{s \in X} \in \Met(B^3, G)$ and $(w_s)_{s \in X_{\PSC}}$ are $G$-invariant, then $(h'_{s, u})_{s \in X, u\in [0, 1]}$ are $G$-invariant.
        \end{proposition}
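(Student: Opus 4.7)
The plan is to replay the construction in \cite[Proposition~6.26]{bamler_ricci_2019} step by step and verify, at each stage, that every choice can be made $G$-equivariantly given that the input data $(h_s)_{s\in X}$, $(w_s)_{s \in X_{\PSC}}$ and the ambient spherical structure on $B^3$ are all $G$-invariant. In outline, the original construction rescales conformally by $w_s$ on the PSC portion of the parameter space, interpolates radially with respect to the standard spherical fibration, and invokes the extension results \cite[Propositions~6.15 and 6.18]{bamler_ricci_2019} to glue to a round/cylindrical model. I would check equivariance for each of these three ingredients separately and then assemble them.

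First I would handle the conformal factor: by Lemma \ref{lem equiv PSC conformal}, the $G$-invariant PSC-conformal condition is equivalent to the bare PSC-conformal condition, so after averaging $w_s$ over the Haar measure of $G$ I may replace it by a $G$-invariant conformal factor depending continuously on $s$. The rescaled metric $w_s^4 h_s$ is then $G$-invariant. Second, the radial cutoffs and fiberwise averaging used in \cite[Proposition~6.26]{bamler_ricci_2019} are built from the standard spherical structure on $B^3$, which $G$ preserves by hypothesis; hence every such cutoff function, as well as every spherically symmetrized auxiliary metric, is automatically $G$-invariant. Third, at the extension steps I substitute the equivariant versions already established: Proposition \ref{prop:equiv-symm-met} in place of \cite[Proposition~6.15]{bamler_ricci_2019}, and the equivariant analogue of \cite[Proposition~6.18]{bamler_ricci_2019} proved above. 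Each of these outputs a $G$-invariant metric whenever the input data are $G$-invariant.

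The main obstacle is conceptual rather than technical: one must be sure that the canonical choices in the original construction (in particular the normalization selecting the target round model and the averaging used to symmetrize within a fiber) are determined by quantities that are themselves $G$-invariant functionals of $h_s$ and $w_s$. Once this is verified, uniqueness of the resulting data forces them to commute with the $G$-action, exactly as in the uniqueness arguments of Theorem \ref{thm equivariance singular ricci flow} and Step~4 of Theorem \ref{thm:equiv_round}. Combining these observations, every intermediate metric in the interpolation $(h'_{s,u})_{s \in X, u \in [0,1]}$ is $G$-invariant, which is the desired conclusion.
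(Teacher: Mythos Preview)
Your outline misses the one place where the construction in \cite[Proposition~6.26]{bamler_ricci_2019} genuinely fails to be $G$-equivariant without modification. The heart of that proposition is not an appeal to \cite[Propositions~6.15, 6.18]{bamler_ricci_2019} (those are not invoked here), but rather (i) the conformal exponential map built on \cite[p.~69]{bamler_ricci_2019}, and (ii) the symmetrization step carried out in \cite[Lemma~6.28]{bamler_ricci_2019}, where one averages over the groups $S_3^+$ and $S_3$ acting at the origin. The paper's proof notes that (i) is $G$-invariant because the map $\varphi$ is produced by an ODE and hence is canonical, but that (ii) requires an actual change: one must replace $S_3^+$ and $S_3$ by their normalizers $N_{G_0}(S_3^+)$ and $N_{G_0}(S_3)$ inside the stabilizer $G_0$ of the origin. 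Without this substitution, averaging over $S_3$ need not commute with the $G$-action, so the output of the rounding step is not $G$-invariant in general.

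Your appeal to ``canonical choices determined by $G$-invariant functionals'' glosses over exactly this point: the averaging group in \cite[Lemma~6.28]{bamler_ricci_2019} is fixed a priori and is not a functional of the data, so uniqueness-type arguments of the sort used in Theorem~\ref{thm equivariance singular ricci flow} do not apply. You need to enlarge the averaging group so that it contains (or at least is normalized by) the relevant part of $G$. Also, a minor point: the hypothesis already assumes $(w_s)$ is $G$-invariant, so the Haar-averaging step you propose via Lemma~\ref{lem equiv PSC conformal} is unnecessary.
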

        \begin{proof}
            Since the construction of conformal exponential maps is via an ODE, one can easily check that $\varphi$ constructed in \cite[Page 69]{bamler_ricci_2019} is $G$-invariant and so is the conformal exponential map. One can follow the proof of \cite[Proposition 6.26]{bamler_ricci_2019} and check that each step preserves the equivariance, except for that in the proof of \cite[Lemma 6.28]{bamler_ricci_2019}, we replace $S^+_3$ by the normalizer $N_{G_0}(S^+_3)$ and $S_3$ by the normalizer $N_{G_0}(S_3)$, where $G_0$ is the stabilizer group at the origin.
        \end{proof}

\bibliography{reference}
\end{document}